\newcommand{\inR}{\in \mathbb{R}}
\newcommand{\inZ}{\in \mathbb{Z}}
\newcommand{\C}{ \mathbb{C}}
\newcommand{\R}{ \mathbb{R}}
\newcommand{\N}{ \mathbb{N}}
\newcommand{\eqdef}{\stackrel{\vartriangle}{=}}
\newcommand{\Dop}{{\rm D}}
\newcommand{\dint}{{\rm d}}
\newcommand{\Fourier}{ \mathcal{F}}
\DeclareMathOperator*{\esssup}{ess\,sup}
\providecommand{\revise}[1]{{#1}}
\providecommand{\Revise}[1]{{#1}}
\def\Spc#1{{\mathcal{#1}}}  
\def\Op#1{{\mathrm{#1}}}  
\def\ee{\mathrm{e}} 
\def\jj{\mathrm{j}} 
\def\Indic{\mathbbm{1}} 
\def\One{\mathbbm{1}}
\newcommand{\embedC}{\xhookrightarrow{}}
\newcommand{\embedIso}{\xhookrightarrow{\mbox{\tiny \rm iso.}}}
\newcommand{\toC}{\xrightarrow{\mbox{\tiny \ \rm c.  }}}
\renewcommand{\[}{\begin{equation}}
\renewcommand{\]}[1]{\label{eq:#1}\end{equation}}
\newtheorem{definition}{Definition}
\newtheorem{proposition}{Proposition}
\newtheorem{theorem}{Theorem}
\title{A Note on BIBO Stability\thanks{The research leading to these results has received funding from the Swiss National Science
Foundation under Grant 200020-162343/1.
}}
\author{
Michael Unser\thanks{Biomedical Imaging Group, \'Ecole polytechnique f\'ed\'erale de Lausanne (EPFL),
Station 17, CH-1015 Lausanne, Switzerland ({\tt michael.unser@epfl.ch}). }
 }
\begin{document}

\maketitle

\begin{abstract} 
The statements on the BIBO stability of continuous-time convolution systems found in engineering textbooks are often either too vague (because of lack of hypotheses) or mathematically incorrect. What is more troubling is that they usually exclude the identity operator. The purpose of this note is to clarify the issue while presenting some fixes. In particular, we show that a linear shift-invariant system
is BIBO-stable in the $L_\infty$-sense if and only if its impulse response is included in the space of bounded Radon measures, which is a superset of $L_1(\R)$ (Lebesgue's space of absolutely integrable functions). As we restrict the scope of this characterization to the convolution operators whose impulse response is a measurable function, we recover the classical statement.
\end{abstract}

\section{Introduction}
A statement that is made in most courses on the theory of linear systems as well as in the English version of Wikipedia\footnote{ \url{https://en.wikipedia.org/wiki/BIBO_stability}. {\em Accessed November 2019}.} is that a convolution operator is stable in the BIBO sense (bounded input and bounded output) if and only if its impulse response is absolutely summable/integrable.
While the proof of this equivalence is fairly straightforward for discrete-time systems, there seems to be some confusion in the continuous domain (see Appendix B for specific references), especially since the above statement excludes the identity operator, whose impulse response is the Dirac distribution $\delta$. Since $\delta$ is not a measurable function in the sense of Lebesgue (see explanations in Appendix A) and hence not a member of $L_1(\R)$, does this mean that the identity operator is not BIBO-stable? Obviously not; this is what we want to clarify here. The argument, which is somewhat technical, rests on the shoulders of two giants: Laurent Schwartz and Lars H\"ormander, who were awarded the Fields medal in 1950 and 1962, respectively, for their fundamental contributions to the theory of distributions and partial differential equations. 


\revise{In the sequel, we shall revisit the topic of BIBO stability with the help of appropriate mathematical tools. In Section \ref{Sec:Classical}, we recall the classical integral definition of a convolution operator. 
We then present a correction to the standard characterization of BIBO-stable filters (Proposition \ref{Theo:BIBO}) together with a new upgraded proof. Since the underlying assumption that the impulse response should be a measurable function excludes the identity operator, we first explain in Section III the extended (distributional) form of convolution supported by Schwartz' kernel theorem (Theorem \ref{Theo:SchwartzLSI}). Based on this formalism, we present two Banach-space extensions
of the classical result that should settle the issue: a first one (Theorem \ref{Theo:BIBO2}) that imposes that the result of the convolution be continuous, and a second (Theorem \ref{Theo:BIBO3}) that characterizes the BIBO-stable filters in full generality. 
The mathematical derivations are presented in Section \ref{Sec:Distribution}, where we also make the connection with known results in harmonic analysis.
}

We like to mention a similar clarification effort by Hans Feichtinger, who proposes to limit the framework to convolution operators that are operating on $C_0(\R)$ (a well-behaved subclass of bounded functions) in order to avoid pathologies \cite{Feichtinger2017b}.
This is another interesting point of view that is complementary to ours, as discussed in Section \ref{Sec:Distribution}.
\section{BIBO Stability: The Classical Formulation}
\label{Sec:Classical}
The convolution 
of two functions $h, f: \R \to \R$ is the function \revise{usually} specified by
\begin{align}
\label{Eq:Conv}
t \mapsto (h \ast f)(t)\eqdef \int_\R h(\tau) f(t-\tau) \dint \tau\end{align}
under the implicit assumption that the integral in \eqref{Eq:Conv} is well defined for any $t\inR$. \revise{This latter point will be clarified as we develop the mathematics.}
In particular, this requires
that the functions $f$ and $h$ both be measurable\footnote{ A function $f: \R \to \R$ is said to be Lebesgue-measurable if the preimage $f^{-1}(E)$ of any Borel set $E$ in $\R$ is a Borel set \cite{Rudin1987}. 
 The property is preserved through pointwise multiplication and translation.} in the sense of Lebesgue. 
Here, instead of designating the continuous-time signal by $f(t)$ and its convolved (or filtered) version
 by $h(t) \ast f(t)$, as engineers usually do, we are using the less ambiguous mathematical notations
 $t \mapsto f(t)$ or $f \in L_p(\R)$ and $t \mapsto (h \ast f)(t)$ or $h \ast f \in L_\infty(\R)$.

If we fix $h$ and consider $f$ as the input signal, then \eqref{Eq:Conv}
defines a linear shift-invariant (LSI) operator (or system) denoted by
$\Op T_h: f \mapsto h\ast f$. Its impulse response $h$ is then \revise{formally} described as
$h=\Op T_h\{\delta\}$, where $\delta \in \Spc D'(\R)$ is the Dirac distribution and $\Spc D'(\R)$  Schwartz' space of distributions \cite{Schwartz:1966}. \revise{ This interpretation is backed by Schwartz' kernel theorem, as explained in Section \ref{Sec:ExtendedConvolution}.}

An important practical requirement for an
LSI system is that its response to any bounded input remains bounded.
 \Revise{There is one mathematical aspect, however, that makes the formulation of BIBO stability nontrivial in the continuous domain: Depending on the context,} the input and output boundedness requirements 
can be strict, with $\revise{\|f\|_{L_\infty}}=\|f\|_{\sup}\eqdef \sup_{t\in \R} |f(t)|<\revise{\infty}$, which arises when the function $f$ is continuous \revise{(i.e.,  $f \in C(\R)$)}, or in the looser sense of Lebesgue: $|f(t)|\le \|f\|_{L_\infty}<\infty$ for almost any $t\inR$ (see \revise{Section \ref{Sec:BanachBounded} for additional explanations}).
\Revise{
This latter condition is often expressed as $f \in L_\infty(\R)$ where
$L_\infty(\R)
=\{f: 
\R \to \R \ \ \mbox{s.t.} \ \ 
f \mbox{ is measurable and } \|f\|_{L_\infty}<\infty
\}
$ is Lebesgue's space of bounded functions.
} 
\begin{definition} 
\label{Def:BIBO} 
The linear operator $\Op T: f \mapsto \Op T\{f\}$ is said to be BIBO-stable if
\begin{enumerate}
\item \Revise{$ \Op T\{f\}$ is well-defined for any $f\in L_\infty(\R)$, and,} 
\item there exists a constant $C>0$ independent of $f$ such that
$$
\|\Op T\{f\}\|_{L_\infty} \le C \|f\|_{L_\infty}
$$
for all $f\in L_\infty(\R)$.
\end{enumerate}
\end{definition}

%
%
%
%

The standard condition for the BIBO stability of the continuous-time  convolution operator $\Op T_h: f \mapsto h\ast f$ 
that is found in engineering textbooks is $\|h\|_{L_1}<\infty$,
where the $L_1$-norm is defined by
\begin{align}
\label{Eq:L1}
\|h\|_{L_1}\eqdef \int_\R |h(t)| \dint t.
\end{align}
A slightly more precise 
 statement is $h\in L_1(\R)$,
where
$$
L_1(\R)=\{f: 
\R \to \R \ \ \mbox{s.t.} \ \ 
f \mbox{ is measurable and } \|f\|_{L_1}<\infty
\}
$$
is Lebesgue's space of absolutely integrable functions. 

The sufficiency of the condition $h\in L_1(\R)$ is deduced from the standard estimate
 \begin{align*}
\left|\int_{\R}h(\tau) f(t-\tau) \, \dint \tau \right| & \leqslant \int_{\R}  |h(\tau)|\cdot|f(t-\tau)|\, \dint\tau \\
&
\leqslant \left(\int_{\R}\left| h(\tau)\right|   \dint \tau\right)\, \|f\|_{L_\infty},
\end{align*}
which is valid for any $t\inR$. The convolution integral 
\eqref{Eq:Conv} is therefore well defined if $f\in L_\infty(\R)$, which then also yields the classical bound on BIBO stability\begin{align}
\label{Eq:supinequal}
\revise{\|h \ast f \|_{L_\infty} }\le \|h\|_{L_1} \, \|f\|_{L_\infty}< \infty.
\end{align}
By adapting the argument that is used in the discrete-time formulation of BIBO stability, many authors (see Appendix B) claim that the condition $h \in L_1(\R)$ is also necessary. To that end,  they apply the convolution system to a ``worst-case'' signal 
\begin{align}
\label{Eq:f0}
f_0(t)={\rm sign}\big(h(-t)\big)
\end{align}
in order to produce the strongest response at $t=0$,
\begin{align*}
(h \ast f_0)(0) =   \int_{-\infty}^{+\infty}  h(\tau){\rm sign}\big(h(\tau)\big)\,  \dint \tau= \int_{-\infty}^{+\infty} \left| h(\tau)\right| \dint \tau,
\end{align*}
 \Revise{which is then claimed to saturate the stability bound \eqref{Eq:supinequal}
with $
\|h \ast f_0\|_{L_\infty}=\|h\|_{L_1} \, \|f_0\|_{L_\infty}$. 
Unfortunately, this simple reasoning has two shortcomings. 
First, unlike in the discrete setting, the characterization of what happens at $t=0$, which is a set of measure zero, is not sufficient to deduce  that
$\|h \ast f_0\|_{L_\infty}\ge (h\ast f_0)(0)$, unless one invokes the continuity of 
$t \mapsto (h \ast f_0)(t)$, which is not yet known at this stage (see Theorem \ref{Theo:BIBO2}). Second, one cannot ensure that
the Lebesgue convolution integral \eqref{Eq:Conv} is well defined for $f_0\in L_\infty(\R)$, unless $h$ is Lebesgue-integrable\footnote{Any measurable function $h: \R \to \R$ admits a unique decomposition as $h=h^+ - h^-$ with $h^+, h^{-}: \R \to \R_{\ge0}$. It is 
Lebesgue integrable if $\min(\|h^+\|_{L_1},\|h^-\|_{L_1})<\infty$ \cite{Folland2013}.}, which then considerably limits the scope of the claim about necessity. 
}


Our first practical fix 
is an extension of the argumentation to
the larger space $L_{1,\rm loc}(\R)$ of measurable functions that are {\em locally} 
integrable, meaning that $\int_\mathbb{K} |h(t)| \dint t< \infty$ over any compact domain $\mathbb{K} \subset \R$.
 The reassuring outcome, which conforms with the practice in the field, is that one can determine the stability of an LSI system by integrating the absolute value of its impulse response---even if $h$ is not {\em globally} Lebesgue integrable, as in the case of an increasing and possibly oscillating exponential.

\begin{proposition}
\label{Theo:BIBO}
If $h \in L_1(\R)$, then the convolution operator $f \mapsto h \ast f$ defined by \eqref{Eq:Conv} is BIBO-stable with $\|h \ast f\|_{L_\infty}\le \|h\|_{L_1}\|f\|_{L_\infty}$.
Conversely, if the impulse response $h$ is measurable and locally integrable with $\int_\R |h(t)|\dint t 
=\revise{\infty}$, then the system is not BIBO-stable, in which case it is said to be unstable.
\end{proposition}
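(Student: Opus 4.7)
The plan is to keep the sufficiency direction intact — the displayed estimate immediately preceding the proposition already yields $\|h\ast f\|_{L_\infty}\le \|h\|_{L_1}\|f\|_{L_\infty}$ for every $f\in L_\infty(\R)$ — and to concentrate on necessity under the weakened hypothesis that $h$ is only locally integrable with $\int_\R|h(t)|\,\dint t=\infty$. The guiding idea is to repair the classical textbook argument by addressing the two gaps flagged after \eqref{Eq:f0}: I truncate the worst-case sign input so that the Lebesgue convolution integral is unambiguous at every $t\inR$, and I upgrade the pointwise lower bound at $t=0$ to an $L_\infty$ lower bound via a continuity argument.

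For each $R>0$ I would take the compactly supported test input
\begin{equation*}
f_R(t) \eqdef {\rm sign}\bigl(h(-t)\bigr)\,\One_{[-R,R]}(t),
\end{equation*}
which is measurable with $\|f_R\|_{L_\infty}\le 1$ (and actually equals $1$ once $R$ is large enough that $h$ is not a.e.\ zero on $[-R,R]$, as forced by the hypothesis $\int_\R|h|=\infty$). At any fixed $t$, the integrand $\tau\mapsto h(\tau)\,f_R(t-\tau)$ is supported in $[t-R,\,t+R]$ and dominated there by $|h|$, so local integrability turns \eqref{Eq:Conv} into a bona fide Lebesgue integral; a direct change of variable then gives
\begin{equation*}
(h\ast f_R)(0)\;=\;\int_{-R}^{R}|h(\tau)|\,\dint\tau,
\end{equation*}
a quantity that tends to $+\infty$ as $R\to\infty$.

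The delicate step is to promote this pointwise identity to the $L_\infty$ estimate $\|h\ast f_R\|_{L_\infty}\ge(h\ast f_R)(0)$. To do so, I would localize the kernel: on any compact window $[-M,M]$ the truncated kernel $h_M \eqdef h\cdot\One_{[-M-R,\,M+R]}$ lies in $L_1(\R)$ by local integrability and coincides with $h$ throughout the $\tau$-support of $f_R(t-\cdot)$ whenever $|t|\le M$. It follows that $h\ast f_R$ agrees with $h_M\ast f_R$ on $[-M,M]$. The standard continuity-of-translation argument in $L_1$ then shows that $h_M\ast f_R$ is uniformly continuous on $\R$, so $h\ast f_R$ is continuous on every compact set; in particular, it is continuous at the origin. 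Since the essential supremum of a function dominates any value taken at a point of continuity, we conclude $\|h\ast f_R\|_{L_\infty}\ge(h\ast f_R)(0)$.

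Combining these pieces, any purported BIBO constant $C$ would have to satisfy $\int_{-R}^{R}|h(\tau)|\,\dint\tau\le \|h\ast f_R\|_{L_\infty}\le C\|f_R\|_{L_\infty}\le C$ for every $R>0$, contradicting $\int_\R|h|=\infty$. I expect the main technical obstacle to be orchestrating the two-layer truncation — of $h$ in the continuity argument and of the sign input in the construction of $f_R$ — cleanly enough that the entire reasoning proceeds without ever invoking global integrability of $h$; once this is in place, everything else reduces to an elementary change of variable and the classical fact that $L_1\ast L_\infty$ sits inside the space of bounded continuous functions.
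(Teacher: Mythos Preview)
Your proposal is correct and follows essentially the same route as the paper: truncate the worst-case sign input to $f_R=f_0\cdot\One_{[-R,R]}$, evaluate $(h\ast f_R)(0)=\int_{-R}^{R}|h|$, and use continuity near $t=0$ to pass from the pointwise value to the $L_\infty$-norm before letting $R\to\infty$. Your inline continuity argument via the localized kernel $h_M$ is precisely what the paper packages separately as Proposition~\ref{Prop:Loc} in Appendix~D.
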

\begin{proof} The first statement is a paraphrasing of \eqref{Eq:supinequal}. For the converse part, we 
assume that  $h \in L_{1,\rm loc}(\R)$ with $\int_\R |h(t)|\dint t=\revise{\infty}$.
Because of the local integrability of $h$, one can then still rely on the definition of the convolution given by \eqref{Eq:Conv}, but only if the input function $f$ is bounded and compactly supported.
By considering the truncated versions $f_{0,T}=f_0 \cdot \Indic_{[-T,T]}$ of the worst-case signal  \eqref{Eq:f0}, we can therefore determine the maximal value of the output signal as
$$
(h \ast f_{0,T})(0)= \int_{-T}^{+T} h(\tau) {\rm sign}\big(h(\tau)\big) \dint \tau =  \int_{-T}^{+T} |h(\tau)| \dint \tau.
$$
\Revise{The additional ingredient is the continuity of $t\mapsto (h \ast f_{0,T})(t)$ in the neighborhood of $t=0$ (see Proposition \ref{Prop:Loc} in Appendix D), 
 which allows us to conclude that $(h \ast f_{0,T})(0)\le \sup_{t \in \R}|h \ast f_{0,T}(t)|=\|h \ast f_{0,T}\|_{L_\infty}$.}
While the latter quantity is finite for any fixed value of $T$, 
we have that $\lim_{T \to \infty} (h \ast f_{0,T})(0) =\int_\R |h(t)| \dint t=\revise{\infty}$, which indicates that the output signal becomes unbounded in the limit. This shows that the underlying system is unstable. 

\end{proof}
Another way of obtaining Proposition \ref{Theo:BIBO} is as a corollary of Theorem \ref{Theo:BIBO3} (the complete characterization of BIBO-stable systems) and Proposition \ref{Theo:BIBOcriterion} in Section IV.
The important examples of unstable filters that fall within the scope of Proposition \ref{Theo:BIBO} are the systems ruled by differential equations with at least one pole in the right-half complex plane; for instance, 
$h(t)=\Indic_{+}(t) \ee ^{\alpha t}$ with ${\rm Re}(\alpha)\ge 0$ \cite{Oppenheim:1996}. The derivative operator with $h=\delta'$ and the Hilbert transform with $h(t)=1/(\pi t)$ are unstable as well (as asserted by Theorem \ref{Theo:BIBO3}), but they fall outside the scope of Proposition \ref{Theo:BIBO}:
the first because $\delta'$ is not a function (but a distribution), and the second because the function $1/t$ is not locally integrable---in fact, the impulse response of the Hilbert transform is the distribution ``$1/(\pi t)$'' that requires the use of a ``principal value'' for the proper definition of the convolution integral \cite{Stein1971}.

In the stable scenario, where $h \in L_1(\R)$, we are able to characterize the underlying filter by its frequency response 
\begin{align}
\widehat{h}(\omega)\eqdef \Fourier\{h\}(\omega)=\int_\R h(t) \ee^{-\jj \omega t}\dint t,
\end{align}
which is the ``classical'' Fourier transform of $h$.
Moreover, the Riemann-Lebesgue lemma ensures that
$\widehat{h} \in C_0(\R)$ with $\|\widehat{h}\|_{\sup} \le \|h\|_{L_1}$.
We recall that $C_0(\R)$ is the Banach space of continuous and bounded functions that decay at infinity, equipped with the $\sup$-norm.

\section{Banach Formulations of BIBO Stability}
\label{Sec:Banach}
The classical textbook statements on continuous-time BIBO stability, including our reformulation in Proposition \ref{Theo:BIBO}, have two limitations.
First, they exclude the identity operator with $h=\delta$, as explained in Appendix A.
Second, they are 
often evasive 
concerning the 
 hypotheses under which the condition $h \in L_1(\R)$ is necessary (see Appendix B). In this section, we show how this can be corrected by considering appropriate Banach spaces.

\subsection{Extension of the Notion of Convolution}
\label{Sec:ExtendedConvolution}
The scope of our mathematical statements relies on Schwartz' famous kernel theorem \revise{\cite{Gelfand-Villenkin1964, Schwartz1950} which delineates the complete 
 class of linear operators} that continuously map $\Spc D(\R) \to \Spc D'(\R)$.
We recall that $\Spc D(\R)=\Spc C_{\rm c}^\infty(\R)$ is the space of smooth and compactly supported test functions equipped with the usual Schwartz topology\footnote{A sequence of functions $\varphi_k \in \Spc D(\R)$ is said to converge to $\varphi$ in $\Spc D(\R)$ if (i) there exists a compact domain $\mathbb{F}$ that includes the support of $\varphi$ and of all $\varphi_k$, and (ii) 
$\|\varphi_k-\varphi\|_{n}\to 0$ for all $n\in \N$,
where $\|\varphi\|_n\eqdef \|\Dop^n\varphi\|_{L_\infty}$ with
$\Dop^n: \Spc D(\R) \to \Spc D(\R)$ the $n$th derivative operator.}. Its topological dual
$\Spc D'(\R)$ is the space of generalized functions also known as {distributions}. 
In essence, a distribution $f \in \Spc D'(\R)$ is a linear map---more precisely, a continuous linear functional---that assigns a real number to each test function $\varphi \in \Spc D(\R)$; this is denoted by $f: \varphi \mapsto \langle f,\varphi \rangle$. For instance, the definition of Dirac's impulse as a distribution is $\delta: \varphi \mapsto \langle \delta,\varphi \rangle\eqdef \varphi(0)$.

\revise{Beside linearity, the property that defines an LSI operator is 
$\Op T_{\rm LSI}\{\varphi(\cdot-t_0)\}(t)=\Op T_{\rm LSI}\{\varphi\}(t-t_0)$ for any $t_0\inR$. Schwartz' theorem then tells us that there is a one-to-one correspondence between
continuous LSI operators $\Spc D(\R) \to C(\R)$ and distributions, with the 
defining distribution $h \in \Spc D'(\R)$ being the impulse response of the operator. 
The relevant space of continuous functions  here is $C(\R)$ with the topology of uniform convergence over compact sets, which involves the system of seminorms $\|f\|_N=\sup_{|t|\le N} |f(t)|, N\in \N$. The latter is an extended functional setup that tolerates arbitrary growth at infinity.

}
%
%
%
%
\revise{
\begin{theorem}[Schwartz' kernel theorem for LSI operators]
\label{Theo:SchwartzLSI}
For any given $h\in \Spc D'(\R)$, the operator $\Op T_h: \varphi \mapsto h \ast \varphi$
with
\begin{align}
\label{Eq:ConvD}
t \mapsto (h \ast \varphi)(t)\eqdef \langle h, \varphi(t-\cdot)\rangle
\end{align}
is LSI and continuously maps $\Spc D(\R) \toC C(\R)$. 
Conversely, for every LSI operator $\Op T_{\rm LSI}: \Spc D(\R) \toC C(\R)$, there is a unique $h\in \Spc D'(\R)$ such that
$\Op T_{\rm LSI}=\Op T_{h}: \varphi \mapsto h \ast \varphi$ where the convolution is specified by
\eqref{Eq:ConvD}.
\end{theorem}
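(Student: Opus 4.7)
My plan is to handle the two directions separately: first I show that every $h\in\Spc D'(\R)$ produces an operator with the stated properties, and then I show that every continuous LSI operator $\Op T_{\rm LSI}:\Spc D(\R)\toC C(\R)$ is uniquely determined by its action at the origin, which in turn defines the representing kernel.

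For the forward direction, the pairing $\langle h,\varphi(t-\cdot)\rangle$ is well-defined for each $t\inR$ because $s\mapsto\varphi(t-s)$ is a bona fide element of $\Spc D(\R)$. Linearity in $\varphi$ is inherited from $h$, and shift-invariance follows by substituting $\varphi(\cdot-t_0)$ for $\varphi$ in the defining formula. For continuity of $t\mapsto(h\ast\varphi)(t)$, I argue that $t_n\to t$ forces $\varphi(t_n-\cdot)\to\varphi(t-\cdot)$ in $\Spc D(\R)$: the supports lie in a common compact set, and each derivative converges uniformly by the uniform continuity of $\Dop^k\varphi$, so continuity of $h$ yields pointwise convergence. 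The real technical step is continuity of $\Op T_h:\Spc D(\R)\toC C(\R)$: fixing a compact $K\subset\R$, the family $\{\varphi(t-\cdot):t\in K\}$ has its supports in a single fixed compact $K'$, and the seminorms satisfy $\|\Dop^k\varphi(t-\cdot)\|_{L_\infty}=\|\Dop^k\varphi\|_{L_\infty}$ for every $t$; invoking continuity of $h$ restricted to $\Spc D_{K'}(\R)$ yields a bound $\sup_{t\in K}|(h\ast\varphi)(t)|\le C_K\sum_{k\le N}\|\Dop^k\varphi\|_{L_\infty}$, which is precisely continuity into $C(\R)$ with its topology of uniform convergence on compacts.

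For the converse, I define the candidate kernel by $\langle h,\psi\rangle\eqdef\Op T_{\rm LSI}\{\psi(-\cdot)\}(0)$. The reflection $\psi\mapsto\psi(-\cdot)$ is a continuous automorphism of $\Spc D(\R)$, $\Op T_{\rm LSI}$ is continuous into $C(\R)$ by hypothesis, and point evaluation is continuous on $C(\R)$ (since uniform convergence on compacts controls every value); composing these three continuous maps confirms $h\in\Spc D'(\R)$. To recover \eqref{Eq:ConvD}, I rewrite shift-invariance in the equivalent form $\Op T_{\rm LSI}\{\varphi\}(t)=\Op T_{\rm LSI}\{\varphi(\cdot+t)\}(0)$ and then compute $\langle h,\varphi(t-\cdot)\rangle=\Op T_{\rm LSI}\{\varphi(t+\cdot)\}(0)=\Op T_{\rm LSI}\{\varphi\}(t)$. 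Uniqueness is immediate since any representing kernel must satisfy the defining formula on every reflected test function, and $\Spc D(\R)$ is invariant under reflection.

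I expect the main obstacle to be the uniform-in-$t$ continuity estimate in the forward direction: although $h$ is only known to be continuous as a functional on individual test functions, one must upgrade this into uniform control over the uncountable family $\{\varphi(t-\cdot):t\in K\}$. The standard resolution exploits the equivalence between continuity of a distribution on $\Spc D(\R)$ and a finite-order seminorm bound on each Fr\'echet subspace $\Spc D_{K'}(\R)$, coupled with the translation invariance of the seminorms, so that a single estimate in $\varphi$ controls the whole translated family at once.
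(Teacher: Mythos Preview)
The paper does not supply its own proof of this theorem: it is stated as a specialization of Schwartz' kernel theorem with references to \cite{Gelfand-Villenkin1964,Schwartz1950}, and then used as a black box. So there is no in-paper argument to compare against line by line.

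Your direct proof is correct and, in fact, more elementary than the route the paper implicitly invokes. The general kernel theorem manufactures a bivariate kernel in $\Spc D'(\R^2)$ for every continuous linear map $\Spc D(\R)\to\Spc D'(\R)$ and then uses shift-invariance to force that kernel to have the form $h(x-y)$; this relies on the structure theory of nuclear spaces. You instead exploit the stronger hypothesis that the range is $C(\R)$: point evaluation at $0$ is continuous on $C(\R)$, so $\psi\mapsto\Op T_{\rm LSI}\{\psi(-\cdot)\}(0)$ is immediately a distribution, and shift-invariance recovers all other values of the output. This is the standard soft argument for translation-invariant operators and it yields a fully self-contained proof without any appeal to the kernel theorem. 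One small point worth making explicit in your forward-direction estimate: the constants $C_K$ and $N$ in $\sup_{t\in K}|(h\ast\varphi)(t)|\le C_K\sum_{k\le N}\|\Dop^k\varphi\|_{L_\infty}$ depend on $K'$, which depends on both $K$ and the support of $\varphi$; this is harmless because convergence $\varphi_n\to\varphi$ in $\Spc D(\R)$ pins down a common support for the sequence, but the dependence should be stated so that the continuity conclusion is transparent.
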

}


Then, depending on the decay properties of $h$, it is generally possible to extend the domain of the convolution operator $\Op T_h$ to some appropriate Banach space according to the procedure described in 
Section \ref{Sec:Distribution}.
For instance, if $h \in L_1(\R)$, then $\Op T_h$ has a continuous extension $L_\infty(\R) \to C_{\rm b}(\R) \subset  C(\R)$ that coincides with the classical definition given by \eqref{Eq:Conv}.

\revise{Finally, we note that, for the cases where the Dirac impulse $\delta$ is in the domain of the extended operator (for instance, when $h \in C(\R)$),  the distributional definition of the convolution given by \eqref{Eq:ConvD} yields $\Op T_h\{\delta\}=h\ast \delta = h$, which explains the term ``impulse response.''
}

\subsection{Banach Spaces of Bounded Functions}
\label{Sec:BanachBounded}
In order to investigate the issue of BIBO stability, it is
helpful to describe the boundedness and continuity properties of functions via their inclusion in appropriate Banach subspaces of $\Spc D'(\R)$. The three relevant function spaces are
$$
C_0(\R) \subset C_{\rm b}(\R)  \subset L_\infty(\R).
$$
The central space consists of the subset of bounded functions that are continuous:
\begin{align*}
C_{\rm b}(\R)=\left\{f: 
\R \to \R \  \mbox{ s.t. } 
f \mbox{ is continuous and } \|f\|_{\sup}<\infty
\right\}.
\end{align*}
It is a classical example of Banach space---a complete normed vector space \cite{Megginson1998}.
The smaller space $C_0(\R)$, which is also equipped with the ${\sup}$-norm, imposes the additional constraint that $f(t)$ should vanish at $t=\pm\infty$.
It is best described as the completion of $\Spc D(\R)$ equipped with the $\sup$-norm, which will have its importance in the sequel. This property is indicated by $C_0(\R)=\overline{(\Spc D(\R), \|\cdot\|_{\sup})}$. The concept is also valid for
$L_1(\R)$, which can be described as $L_1(\R)=\overline{(\Spc D(\R), \|\cdot\|_{L_1})}$, where 
the $L_1$-norm is defined by \eqref{Eq:L1} with $f \in \Spc D(\R)$ and the integral being classical---in the sense of Riemann.
This completion property applies to $L_p(\R)=\overline{(\Spc D(\R), \|\cdot\|_{L_p})}$
with $p\in[1,\infty)$ as well \cite[Proposition 8.17, p. 254]{Folland2013}, but not for $p=\infty$, which explains the importance of the space $C_0(\R)$, which is distinct from $L_\infty(\R)$.

%
%
%

In order to properly identify $L_\infty(\R)$ as a subspace of $\Spc D'(\R)$, we shall exploit the property that the $L_\infty$-norm is the dual of the $L_1$-norm.
We therefore choose to define the $L_\infty$-norm as
\begin{align}
\label{Eq:Linftydual}
\|f\|_{L_\infty} \eqdef \sup_{ \varphi \in \Spc D(\R):\,  \|\varphi\|_{L_1}\le 1}\langle f, \varphi\rangle=\sup_{  \varphi \in L_1(\R):\,  \|\varphi\|_{L_1}\le 1}
\langle f, \varphi\rangle,
\end{align}
where the central part of \eqref{Eq:Linftydual} takes advantage of the denseness\footnote{This means that, for any $f \in L_1(\R)$ and any $\epsilon>0$, there exists a function $\varphi_\epsilon \in \Spc D(\R)$ such that $\|f -\varphi_\epsilon\|_{L_1}<\epsilon$. It is a direct consequence of $L_1(\R)=\overline{(\Spc D(\R), \|\cdot\|_{L_1})}$.}
 of $\Spc D(\R)$ in $L_1(\R)$.
This yields a definition that is valid not only for (measurable) functions, but also for all
$f \in \Spc D'(\R)$. 
Consequently, we can redefine our target space as
\begin{align}
\label{Eq:Linftyl}L_\infty(\R)=\{f\in \Spc D'(\R): \|f\|_{L_\infty}<\infty\},
\end{align}
which is readily identified as the topological dual of $L_1(\R)$; that is, $L_\infty(\R)=\big(L_1(\R)\big)'$ due to the dual specification of the $L_\infty$-norm given by the right-hand side of \eqref{Eq:Linftydual}.

While \eqref{Eq:Linftyl} defines $L_\infty(\R)$ as a subspace of $\Spc D'(\R)$, we can also
identify its elements as (bounded) measurable functions $f: \R \to \R$ via the classical association
\begin{align}
\label{Eq:dualityprod}
\varphi \mapsto \langle f, \varphi  \rangle\eqdef \int_\R \varphi(t)f(t) \dint t,
\end{align}
where the right-hand side of \eqref{Eq:dualityprod} is a standard Lebesgue integral.
\revise{Now, the main difference between the $\sup$-norm and the $L_\infty$-norm
is that, for $f \in L_\infty(\R)$ (identified as a function), the inequality $|f(t)| \le \|f\|_{L_\infty}$ holds for almost every $t\inR$. This means that it holds over the whole real line except, possibly, on a set of measure zero. This is often indicated as $\|f\|_{\infty}=\esssup_{t \inR} |f(t)|$, using the notion of essential supremum. In other words, the $L_\infty$-norm is more permissive than the $\sup$-norm with $\|f\|_{\infty}\le \|f\|_{\sup}$. However, the two norms are equal whenever the function $f$ is continuous, which translates into the isometric inclusion $C_{\rm 0}(\R)\embedIso C_{\rm b}(\R)\embedIso L_\infty(\R)$.}

\subsection{Extended Results on BIBO Stability}
Remarkably, the combination of the two latter function spaces enables us to formulate a first Banach extension of the classical statement on BIBO stability.
\revise{To that end, we shall restrict the distributional 
framework covered by Theorem \ref{Theo:SchwartzLSI} to the case where
the impulse response $h$ is identifiable as a measurable function \big(i.e., $h \in L_{1,{\rm loc}}(\R)\subset \Spc D'(\R)$\big). The linear functional on the right-hand side of \eqref{Eq:ConvD} then has an explicit integral description given by \eqref{Eq:Conv}
with $f=\varphi \in \Spc D(\R)$.
Within this class of convolution operators, we now identify the ones whose domain can be extended to $L_\infty(\R)$.}

\begin{theorem}
\label{Theo:BIBO2} 
The convolution operator $\Op T_h: f \mapsto h \ast f$ with $h \in L_{1,\rm loc}(\R)$ 
\revise{has a continuous extension} $L_\infty(\R) \toC C_{\rm b}(\R)$ if and only if $h \in L_1(\R)$.
Moreover,
$$
\|h \ast f\|_{\sup} \le \|h \|_{L_1} \|f\|_{L_\infty}
$$
with the bound being sharp in the sense that it also yields the norm of the underlying operator: $\|\Op T_h\|_{L_\infty\to C_{\rm b}}=\|h\|_{L_1}$ (see Definition \ref{Def:OpNorm}).
\end{theorem}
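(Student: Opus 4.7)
The plan is to dispatch the three claims---the norm bound together with the $C_{\rm b}$-valued continuity of the extension, the necessity of $h\in L_1(\R)$, and the sharpness of the operator norm---in sequence. For sufficiency, the pointwise estimate established in \eqref{Eq:supinequal} already yields $|(h\ast f)(t)|\le \|h\|_{L_1}\|f\|_{L_\infty}$ for every $t\in\R$, so only the upgrade from $L_\infty$-boundedness to membership in $C_{\rm b}(\R)$ remains. I would invoke the continuity of translation in $L_1(\R)$: for $h\in L_1(\R)$, $\|h(\cdot+\delta)-h\|_{L_1}\to 0$ as $\delta\to 0$. Combined with the estimate $|(h\ast f)(t+\delta)-(h\ast f)(t)|\le \|h(\cdot+\delta)-h\|_{L_1}\|f\|_{L_\infty}$, obtained after a change of variable under the integral, this gives uniform continuity of $h\ast f$ on $\R$, and hence $h\ast f\in C_{\rm b}(\R)$ with the stated bound.

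For necessity, assume that a continuous extension with operator norm $C$ exists. Since its restriction to $\Spc D(\R)$ must coincide with the classical convolution (the integral \eqref{Eq:Conv} being well defined for $\varphi\in\Spc D(\R)$ because $h\in L_{1,{\rm loc}}(\R)$), we obtain $|(h\ast\varphi)(0)|\le \|h\ast\varphi\|_{\sup}\le C\|\varphi\|_{L_\infty}$ for every $\varphi\in\Spc D(\R)$. I would then mollify $g_T(\tau)={\rm sign}(h(\tau))\Indic_{[-T,T]}(\tau)$ to obtain a sequence $\psi_n\in\Spc D(\R)$ with $\|\psi_n\|_{L_\infty}\le 1$, supports contained in $[-T-1,T+1]$, and $\psi_n\to g_T$ pointwise almost everywhere. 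Setting $\varphi_n(\sigma)=\psi_n(-\sigma)\in\Spc D(\R)$ and applying the bound above yields $\bigl|\int_\R h(\tau)\psi_n(\tau)\dint\tau\bigr| = |(h\ast\varphi_n)(0)|\le C$. Since $|h|\Indic_{[-T-1,T+1]}\in L_1(\R)$ is an integrable envelope, Lebesgue's dominated convergence theorem yields $\int_{-T}^T|h(\tau)|\dint\tau=\lim_{n\to\infty} \int_\R h(\tau)\psi_n(\tau)\dint\tau\le C$. Letting $T\to\infty$ gives $\|h\|_{L_1}\le C<\infty$, hence $h\in L_1(\R)$.

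For the sharpness of the operator norm, the inequality $\|\Op T_h\|_{L_\infty\to C_{\rm b}}\le \|h\|_{L_1}$ follows from the first step. For the reverse inequality, I would plug in the worst-case signal $f_0(t)={\rm sign}(h(-t))$, which lies in $L_\infty(\R)$ with $\|f_0\|_{L_\infty}\le 1$, and compute via the classical integral $(h\ast f_0)(0)=\int_\R|h(\tau)|\dint\tau=\|h\|_{L_1}$. Since sufficiency now guarantees $h\ast f_0\in C_{\rm b}(\R)$, the value at $0$ is legitimately dominated by the sup-norm, and we obtain $\|h\ast f_0\|_{\sup}\ge (h\ast f_0)(0)=\|h\|_{L_1}$, finishing the identification. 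The main obstacle is the necessity step: because $\Spc D(\R)$ is not norm-dense in $L_\infty(\R)$, a straightforward density argument cannot transfer operator-norm information from the extension back to $h$. The workaround is that a continuous extension nevertheless imposes a uniform bound on $\Op T_h$ restricted to $\Spc D(\R)$, which one then leverages through pointwise---rather than norm---approximation of the truncated sign pattern, coupled with dominated convergence; the latter is admissible precisely because $h$ is assumed locally integrable.
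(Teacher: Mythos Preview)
Your proof is correct, but it follows a more elementary and self-contained route than the paper's. For sufficiency, the paper establishes $h\ast\varphi\in C_{\rm b}(\R)$ for $\varphi\in\Spc D(\R)$ via the H\"older/translation argument of Item~1 in Theorem~\ref{Theo:Convolution}, and then reaches general $f\in L_\infty(\R)$ through the extension/adjoint machinery of Section~\ref{Sec:Distribution}; you instead write down the convolution integral directly for $f\in L_\infty(\R)$ and use continuity of translation in $L_1(\R)$ to get uniform continuity in one stroke. For necessity, the paper takes an indirect path: it uses the isometric embedding $C_{\rm b}(\R)\embedIso L_\infty(\R)$ to invoke Item~5 (the $L_\infty\to L_\infty$ characterization, whose converse in turn rests on Item~3 via evaluation at $t=0$ on $C_0(\R)$), obtaining $h\in\Spc M(\R)$, and then applies Proposition~\ref{Theo:BIBOcriterion} to conclude $\|h\|_{\Spc M}=\|h\|_{L_1}$. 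Your argument bypasses the Radon-measure detour entirely: you exploit that point evaluation at $0$ is continuous on $C_{\rm b}(\R)$, mollify the truncated sign pattern, and pass to the limit by dominated convergence, with local integrability of $h$ supplying the dominating function. The sharpness step (worst-case signal $f_0$) is identical in both. What the paper's route buys is economy within its global architecture---Items~3--5 and Proposition~\ref{Theo:BIBOcriterion} are needed anyway for Theorem~\ref{Theo:BIBO3}---whereas your route buys a standalone proof that does not depend on the $\Spc M(\R)$ framework. One cosmetic remark: avoid using $\delta$ for the translation increment, since the paper reserves that symbol for the Dirac distribution.
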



The proof of this result is deferred to Section \ref{Sec:Distribution} \big(see Item 2) and the final statement of Theorem \ref{Theo:Convolution}.

\revise{It is of interest to compare Proposition \ref{Theo:BIBO} and  Theorem \ref{Theo:BIBO2} because they address the problem of stability from 
different but complementary perspectives.
Proposition \ref{Theo:BIBO} is focused primarily on the well-posedness of the convolution integral \eqref{Eq:Conv} for $f \in L_\infty(\R)$. It can be paraphrased as: ``Let $h$ be a measurable (and locally integrable) function. Then, the Lebesgue integral \eqref{Eq:Conv} defines a convolution operator that is BIBO-stable if and only if $h\in L_1(\R)$.''
By contrast,
Theorem \ref{Theo:BIBO2} considers the complete family of ``classical'' convolution operators 
$\Op T_h: \Spc D(\R) \to C(\R)$ with $h \in L_{1,\rm loc}(\R)$ and precisely identifies the subset of operators that have a continuous extension from $L_\infty(\R) \to C_{\rm b}(\R)$.
Since  $C_{\rm b}(\R)$ is isometrically embedded in $L_\infty(\R)$, this is more informative than Proposition \ref{Theo:BIBO}
because it also tells us
that 
$(h\ast f)(t)$ is a continuous function of $t\inR$. In that respect,
we note that the requirement that the convolution of any bounded function $f$ be continuous excludes the use of the identity operator with $h=\delta$ at the onset, even if we extend the framework to $h\in \Spc D'(\R)$.
}
%
%
%

To obtain a more permissive characterization of BIBO stability, we need 
to extend the range of the operator from $C_{\rm b}(\R)$ to $L_\infty(\R)$, which should then also translate into a corresponding enlargement of the class of admissible impulse responses. We shall delineate the latter in a way that parallels our definition of $L_\infty(\R)$, with the roles of the $L_1$- and $\sup$- (or $L_\infty$-) norms being interchanged. 
To that end, we first define the $\Spc M$-norm as
\begin{align}
\label{Eq:Mdual}
\|f\|_{\Spc M} \eqdef \sup_{ \varphi \in \Spc D(\R):\, \|\varphi\|_{\sup}\le 1} 
\langle f, \varphi\rangle.
\end{align}
This then yields the Banach space
\begin{align}
\Spc M(\R)=\{f\in \revise{\Spc D'(\R)}: \|f\|_{\Spc M}<\infty\},
\end{align}
which also happens to be the space of bounded Radon measures\footnote{We adhere with Bourbaki's nomenclature to distinguish the two complementary interpretations of a measure:
either as a continuous linear functional on $\Spc D(\R)$ (Radon measure), or as a set-theoretic additive rule that associates a real number to any Borel set of $\R$ (signed Borel measure) \cite{Bourbaki2004, Bony2001}. 
} on $C_0(\R)$.
In other words, $\Spc M(\R)$ is the topological dual of $C_0(\R)$. Moreover, we can invoke the Riesz-Markov theorem to identify $\Spc M(\R)=\big(C_0(\R)\big)'$ with the space of bounded signed Borel measures on $\R$ \cite{Rudin1987}. Concretely, this means that any $h\in \Spc M(\R)$ is associated with a unique Borel measure $\mu_h$, which then gives a concrete definition of the linear functional
\begin{align}
\label{Eq:RadontoBorel}
f \mapsto \langle h,f \rangle \eqdef \int_\R f(\tau) \dint \mu_h(\tau)
\end{align}
for any measurable function $f$, while the total-variation norm of the measure $\mu_h$ is given by
$\|\mu_h\|_{\rm TV}\eqdef\int_\R \dint|\mu_h|=\|h\|_{\Spc M}$ (see Section \ref{Sec:Radon}).
The main point for us is that $\Spc M(\R)$ is a superset of $L_1(\R)$, with $\|f\|_{\Spc M}=\|f\|_{L_1}$ for all 
$f \in L_1(\R)$. Moreover, we have that
$\delta(\cdot-t_0) \in \Spc M(\R)$ for any $t_0\inR$ with $\|\delta(\cdot-t_0)\|_{\Spc M}=1$, as can be readily inferred from \eqref{Eq:Mdual} by considering a \Revise{non-negative} test function that achieves its maximum $\varphi(t_0)=1$ at $t=t_0$.

For the cases where the impulse response $h\in \Spc M(\R)$ is not an $L_1$ function, we  extend our definition of the original (Lebesgue) convolution
integral as 
\begin{align}
\label{Eq:ConvMeasure}
t \mapsto (h \ast f) (t)= \langle h, f(t-\cdot)\rangle\eqdef \int_\R f(t-\tau) \dint \mu_h(\tau),
\end{align}
which is the same as \eqref{Eq:Conv} when we can write $\dint\mu_h(\tau)=h(\tau)\dint \tau$, which happens when the corresponding measure $\mu_h$ is absolutely continuous\footnote{Another way to put it is that $h$ is the Radon-Nikodym derivative of $\mu_h$.} with respect to the Lebesgue measure.
A standard manipulation then yields that
\begin{align}
\label{Eq:Minequal}
\left|(h \ast f) (t)\right|& \le \int_\R |f(t-\tau)| \, \dint |\mu_h|(\tau) \nonumber\\
&\le \|f\|_{L_\infty} \int_\R \dint |\mu_h| =\|f\|_{L_\infty} \|h\|_{\Spc M},
\end{align}
which is the basis for the direct (easy) part of Theorem \ref{Theo:BIBO3}, where the complete class of BIBO-stable systems is identified, including the identity operator. 

\begin{theorem} 
\label{Theo:BIBO3}
The convolution operator $\Op T_h: f \mapsto h \ast f$ with $h \in \Spc D'(\R)$ 
\Revise{has a continuous extension}
$L_\infty(\R) \toC L_\infty(\R)$ 
if and only if $h \in \Spc M(\R)$.
Moreover,
$$
\|h \ast f\|_{\infty} \le \|h \|_{\Spc M} \; \|f\|_{L_\infty}
$$
with the bound being sharp in the sense that $\|\Op T_h\|_{L_\infty\to L_\infty}=\|h\|_{\Spc M}$.
\end{theorem}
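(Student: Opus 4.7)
The plan is to prove both implications by exploiting the duality that defines the $\Spc M$-norm via \eqref{Eq:Mdual}.

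For sufficiency, I would take $h \in \Spc M(\R)$ and define the candidate extension pointwise by $(h \ast f)(t) = \int_\R f(t-\tau)\, \dint \mu_h(\tau)$ as in \eqref{Eq:ConvMeasure}. Three checks are needed: (i) absolute convergence for every $t \in \R$, which is immediate from $\int_\R |f(t-\tau)|\, \dint|\mu_h|(\tau) \le \|f\|_{L_\infty}\|h\|_{\Spc M}$; (ii) measurability of $t \mapsto (h \ast f)(t)$, which follows from Fubini-Tonelli because $(t,\tau) \mapsto f(t-\tau)$ is jointly measurable and $|\mu_h|$ is finite; and (iii) the operator bound, which is exactly the content of \eqref{Eq:Minequal}. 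Linearity and shift-invariance are manifest from the integral formula, and on $\Spc D(\R)$ the definition reduces to \eqref{Eq:ConvD}, so this really is a continuous extension. Altogether, $\|\Op T_h\|_{L_\infty \to L_\infty} \le \|h\|_{\Spc M}$.

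For necessity, I would suppose that $\bar{\Op T}_h: L_\infty(\R) \to L_\infty(\R)$ is a continuous extension of the distributional operator of Theorem \ref{Theo:SchwartzLSI} with operator norm $C$. The goal is to bound $\langle h, \varphi\rangle$ uniformly for $\varphi \in \Spc D(\R)$ with $\|\varphi\|_{\sup}\le 1$. Define the reflection $\psi(t) \eqdef \varphi(-t) \in \Spc D(\R)$. By Theorem \ref{Theo:SchwartzLSI}, the distributional convolution $h \ast \psi$ defined by \eqref{Eq:ConvD} is a continuous function on $\R$ whose value at $t=0$ equals $\langle h, \psi(-\cdot)\rangle = \langle h, \varphi\rangle$. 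Since the equivalence class $\bar{\Op T}_h \psi \in L_\infty(\R)$ must contain this continuous representative, and since for continuous functions the essential supremum coincides with the supremum, one obtains
$$|\langle h, \varphi\rangle| = |(h\ast \psi)(0)| \le \|h\ast\psi\|_{\sup} = \|\bar{\Op T}_h \psi\|_{L_\infty} \le C \|\psi\|_{L_\infty} = C\|\varphi\|_{\sup} \le C.$$
Taking the supremum over admissible $\varphi$ in \eqref{Eq:Mdual} yields $\|h\|_{\Spc M} \le C$. Combining the two halves gives both $h \in \Spc M(\R)$ and the sharp identity $\|h\|_{\Spc M} = \|\Op T_h\|_{L_\infty \to L_\infty}$.

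The delicate point is the identification of $\bar{\Op T}_h \psi$ (an $L_\infty$-equivalence class) with the continuous function $h \ast \psi$ produced by \eqref{Eq:ConvD}. Because $\Spc D(\R)$ is not dense in $L_\infty(\R)$, the extension is not canonical; nevertheless, any continuous extension must reproduce $\Op T_h$ on $\Spc D(\R)$ by definition, which forces the $L_\infty$ class to contain the continuous distributional convolution. Once this link is secured, the passage from the $L_\infty$-bound on $\bar{\Op T}_h \psi$ to a pointwise bound at $t=0$ is the elementary observation that a continuous function attains its essential supremum pointwise—the same mechanism, now rigorously justified, that was needed to repair the informal argument discussed after \eqref{Eq:f0}.
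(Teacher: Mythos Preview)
Your proposal is correct and follows essentially the same route as the paper. For sufficiency you use the direct measure-theoretic bound \eqref{Eq:Minequal}, which the paper also presents (the paper additionally packages the extension as the adjoint of $\Op T_{h^\vee}: L_1 \toC L_1$, but this is organizational); for necessity your evaluation $(h\ast\psi)(0)=\langle h,\varphi\rangle$ with $\psi=\varphi^\vee$ and the passage from $\|\cdot\|_{L_\infty}$ to a pointwise value via the continuity furnished by Theorem~\ref{Theo:SchwartzLSI} is exactly the argument the paper gives in Item~3 of Theorem~\ref{Theo:Convolution} and then reuses for Item~5 through the embedding $C_0(\R)\embedIso L_\infty(\R)$.
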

%

This result, which is also valid in dimensions higher than $1$, is known in harmonic analysis \cite[p. 140 Corollary 2.5.9]{Grafakos2004},
\cite{Stein1971}
but much less so in engineering circles. It can be traced back to an early paper by H\"ormander that provides a comprehensive treatment of convolution operators on $L_p$ spaces \cite{Hormander1960}. 
The reminder of the paper is devoted to the proof of the two theorems on BIBO stability and 
of some
interesting variants (see Theorem \ref{Theo:Convolution}). To that end, we shall rely on Schwartz' powerful distributional formalism which, as we shall see, allows for a rather soft derivation, once the prerequisites have been laid out.

\section{Mathematical Derivations}
\label{Sec:Distribution}
 
\subsection{Extension of Convolution Operators}
The most general form of a convolution operator 
backed by Schwartz' kernel theorem \revise{(see Theorem \ref{Theo:SchwartzLSI})}
is $\Op T_h: \Spc D(\R) \to C(\R)\embedC \Spc D'(\R)$ with $h \in \Spc D'(\R)$, where 
$\Op T_h\{\varphi\}$ is defined by \eqref{Eq:ConvD} for any $\varphi \in \Spc D(\R)$.
The two complementary ingredients at play there
 are: (i) the restriction of the domain
to $\Spc D(\R)$---the ``nicest'' class of functions in terms of smoothness and decay---and (ii) the extension of the range to 
$\Spc D'(\R)$, which can accommodate an arbitrary degree of growth (polynomial, or even exponential) at infinity. In other words, the theoretical framework is such that it can deal with the very worst
scenarios, including unstable differential systems whose impulse response is exponentially increasing.
 
Then, depending on the smoothness and decay properties of $h$, it is usually possible
to extend the domain of $\Op T_h$ to some Banach space $\Spc X\supseteq \Spc D(\R)$ that is continuously embedded in $\Spc D'(\R)$, which is denoted by
$\Spc X \embedC \Spc D'(\R)$. For this to be feasible, we require that
$\|\cdot\|_\Spc X$ be a valid norm on $\Spc D(\R)$ and that $\Spc D(\R)$ be dense in $\Spc X$, which is equivalent to
$\Spc X=\overline{(\Spc D(\R),\|\cdot\|_{\Spc X})}$. In other words, $\Spc X$ is the completion of
$\Spc D(\R)$ equipped with the $\|\cdot\|_{\Spc X}$-norm.

We start by recalling the definition of the norm of a bounded operator.
\begin{definition} 
\label{Def:OpNorm}
Let $(\Spc X,\|\cdot\|_{\Spc X})$ and $(\Spc Y,\|\cdot\|_{\Spc Y})$ be two Banach spaces and $\Op T$ a linear operator $\Spc X \to \Spc Y$. Then, the operator is said to be bounded if
$$
\|\Op T\|_{\Spc X \to \Spc Y}\eqdef \sup_{f \in \Spc X\backslash\{0\}} \frac{\|\Op T\{f\}\|_{\Spc Y} }{\|f\|_{\Spc X}}<\infty.
$$
\end{definition}

A direct consequence of Definition \ref{Def:OpNorm} is that a bounded operator $\Op T: \Spc X \to \Spc Y$ continuously maps $\Spc X$ into $\Spc Y$, as indicated by $\Op T: \Spc X \toC \Spc Y$.

Theorem \ref{Theo:BLTExtension} then describes a functional mechanism that allows us to extend an operator initially defined on $\Spc D(\R)$. It is a particularization of a fundamental extension theorem in the theory of Banach spaces
\cite[Theorem I.7, p. 9]{Reed1980}.

\begin{proposition} [Extension of a linear operator]
\label{Theo:BLTExtension}
\Revise{Let $\Spc X$ and $\Spc Y$ be two Banach subspaces of $\Spc D'(\R)$ with the additional property that $\Spc D(\R)$ is dense in $\Spc X$.
Then, the linear operator $\Op T: \Spc D(\R) \toC \Spc D'(\R)$ }has a unique continuous extension $\Spc X=\overline{(\Spc D(\R),\|\cdot\|_\Spc X)} \toC \Spc Y$ with $\|\Op T \|_{\Spc X \to \Spc Y}\le C$ if and only if 
\begin{align}
(i) \ \ & \Op T\{\varphi\}\in \Spc Y,\quad \mbox{and}\\
(ii) \ \ &\|\Op T\{\varphi\}\|_{\Spc Y} \le C \|\varphi\|_{\Spc X}
\end{align}
for all $\varphi \in \Spc D(\R)$ and some constant $C>0$.
\end{proposition}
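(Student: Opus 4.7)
The plan is to recognize this as an instance of the classical bounded linear transformation (BLT) theorem, specialized to the setting where both $\Spc X$ and $\Spc Y$ sit inside $\Spc D'(\R)$. I will handle the two implications separately, with the forward ($\Rightarrow$) direction being essentially tautological and the reverse ($\Leftarrow$) direction carrying the real content.

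For necessity, suppose a continuous extension $\widetilde{\Op T}: \Spc X \toC \Spc Y$ exists with $\|\widetilde{\Op T}\|_{\Spc X \to \Spc Y} \le C$. Since $\Spc D(\R) \subset \Spc X$ and $\widetilde{\Op T}$ extends $\Op T$, every $\varphi \in \Spc D(\R)$ satisfies $\Op T\{\varphi\} = \widetilde{\Op T}\{\varphi\} \in \Spc Y$, giving (i), and Definition \ref{Def:OpNorm} applied to $\varphi$ immediately yields (ii).

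For sufficiency, I start from (i) and (ii) and construct the extension by density. Given $f \in \Spc X$, choose $\varphi_n \in \Spc D(\R)$ with $\|\varphi_n - f\|_{\Spc X} \to 0$; such a sequence exists because $\Spc D(\R)$ is dense in $\Spc X$. The sequence $(\varphi_n)$ is Cauchy in $\Spc X$, so (ii) combined with linearity gives
$$
\|\Op T\{\varphi_n\} - \Op T\{\varphi_m\}\|_{\Spc Y} \le C\,\|\varphi_n - \varphi_m\|_{\Spc X},
$$
showing that $(\Op T\{\varphi_n\})$ is Cauchy in $\Spc Y$. Since $\Spc Y$ is a Banach space, the sequence converges to some $g \in \Spc Y$, and I define $\widetilde{\Op T}\{f\} \eqdef g$. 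The map is well-defined: if $\psi_n \to f$ is a second approximating sequence, then $\|\Op T\{\varphi_n\} - \Op T\{\psi_n\}\|_{\Spc Y} \le C\|\varphi_n - \psi_n\|_{\Spc X} \to 0$, so the two limits in $\Spc Y$ agree. Linearity of $\widetilde{\Op T}$ is inherited from that of $\Op T$ by passing to the limit, and the norm estimate follows from
$$
\|\widetilde{\Op T}\{f\}\|_{\Spc Y} = \lim_{n\to \infty}\|\Op T\{\varphi_n\}\|_{\Spc Y} \le C\,\lim_{n\to \infty}\|\varphi_n\|_{\Spc X} = C\,\|f\|_{\Spc X}.
$$
Uniqueness is the standard argument: any second continuous extension must agree with $\Op T$, hence with $\widetilde{\Op T}$, on the dense subspace $\Spc D(\R)$, and therefore on all of $\Spc X$ by continuity.

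The only nonroutine point, and hence what I anticipate as the main obstacle, is reconciling the abstractly constructed limit in $\Spc Y$ with the ambient distributional interpretation: I need to confirm that $\widetilde{\Op T}\{f\}$ coincides with the natural image of $f$ under $\Op T$ viewed inside $\Spc D'(\R)$, so that the extension is genuine rather than an unrelated new object. This is resolved by the continuous embeddings $\Spc X \embedC \Spc D'(\R)$ and $\Spc Y \embedC \Spc D'(\R)$: convergence of $\varphi_n \to f$ in $\Spc X$ transfers to convergence in $\Spc D'(\R)$, the $\Spc Y$-limit of $\Op T\{\varphi_n\}$ is simultaneously a $\Spc D'(\R)$-limit, and the distributional continuity of $\Op T: \Spc D(\R) \toC \Spc D'(\R)$ guarantees compatibility. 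Hence $\widetilde{\Op T}: \Spc X \toC \Spc Y$ is the desired unique continuous extension with operator norm bounded by $C$.
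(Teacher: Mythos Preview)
Your proof is correct and follows the standard BLT (bounded linear transformation) argument. The paper does not actually supply a proof of this proposition: it states the result and attributes it as ``a particularization of a fundamental extension theorem in the theory of Banach spaces [Reed--Simon, Theorem I.7, p.~9].'' So there is nothing to compare against beyond noting that you have written out precisely the classical density argument that the cited reference contains.

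One small remark: your final paragraph about reconciling the abstract $\Spc Y$-limit with the ambient distributional structure is unnecessary for the proposition as stated, and the reasoning there is slightly off. The continuity of $\Op T: \Spc D(\R) \toC \Spc D'(\R)$ is with respect to the Schwartz topology on $\Spc D(\R)$, not the $\|\cdot\|_{\Spc X}$-norm, so it does not directly control what happens along a sequence $\varphi_n \to f$ that converges only in $\Spc X$. Fortunately no such compatibility check is required: the proposition asks only for a continuous extension $\Spc X \to \Spc Y$ agreeing with $\Op T$ on $\Spc D(\R)$, and your Cauchy-sequence construction delivers exactly that. The embeddings $\Spc X, \Spc Y \embedC \Spc D'(\R)$ matter only insofar as they ensure the extended operator can still be interpreted distributionally, which is how the paper subsequently uses it in Definition~\ref{Def:Convolution}.
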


Since a convolution operator $\Op T_h: \Spc D(\R) \toC \Spc D'(\R)$ 
is uniquely characterized by its impulse response $h\in\Spc D'(\R)$, the same holds true for its extension $\Op T_h: \Spc X \toC \Spc Y$, which justifies the use of the same symbol. 
Rather than defining $\Op T_h\{f\}=h \ast f$ through a Lebesgue integral as in \eqref{Eq:Conv} or \eqref{Eq:ConvMeasure}, we can therefore rely on \eqref{Eq:ConvD} and
define our extended convolution operator $\Op T_h: \Spc X \to \Spc Y$ through a limit process. Specifically,
we pick a Cauchy sequence $(\varphi_n)$ in $(\Spc D(\R),\|\cdot\|_{\Spc X})$ such that 
$\lim_{n\to\infty} \varphi_n=f\in \Spc X$. Then, the sequence of functions $(g_n=h \ast \varphi_n)$ with
\begin{align}
\label{Eq:ConvS}
t \mapsto (h \ast \varphi_n)(t)=\langle h, \varphi_n(t-\cdot)\rangle
\end{align}
is Cauchy in $\Spc Y$ and converges to a limit $g=\lim_{n\to \infty}(h \ast \varphi_n) \in \Spc Y$,
independently of the choice of the $\varphi_n$ since the space $\Spc Y$ is complete.
We now recapitulate this process in the form of a definition.
\begin{definition}[Banach extension of a distributional convolution operator]
\label{Def:Convolution}
Let $\Spc X$ and $\Spc Y$ be two Banach subspaces of $\Spc D'(\R)$ with the additional property that $\Spc D(\R)$ is dense in $\Spc X$. 
When the two conditions in Theorem \ref{Theo:BLTExtension} hold, the unique continuous extension $\Op T_h: \Spc X \toC \Spc Y$ of the convolution operator specified by \eqref{Eq:ConvS} with $h \in \Spc D'(\R)$ is defined by
\begin{align}
\label{Eq:ConvS2}
\Op T_h: f \mapsto h \ast f \eqdef \lim_{n\to \infty}(h \ast \varphi_n) \in \Spc Y,
\end{align}
where $(\varphi_n)$ is any sequence in $\Spc D(\R)$ such that 
$\lim_{n\to\infty} \|f-\varphi_n\|_\Spc X=0$.
\end{definition}



Also important for our purpose is the adjoint operator $\Op T^\ast: \Spc Y'\to \Spc X'$, which is the unique linear operator
such that 
$$\langle g,\Op T\{ f\}\rangle_{\Spc Y' \times \Spc Y}=\langle \Op T^\ast\{ g\},f\rangle_{\Spc X' \times \Spc X}$$
for any $g \in \Spc Y'$ and $f\in \Spc X'$, where the spaces $\Spc X'$ and $\Spc Y'$ are the duals of the topological vector spaces $\Spc X$ and $\Spc Y$, respectively.
If $\Op T: \Spc X \toC \Spc Y$ is bounded with operator norm $\|\Op T\|$, then the adjoint
$\Op T^\ast: \Spc Y' \toC \Spc X'$ is bounded with $\|\Op T^\ast\|=\|\Op T\|$. In particular,
the adjoint of the convolution operator $\Op T_h: \Spc D(\R) \toC \Spc D'(\R)$ is $\Op T_{h^\vee}: 
\Spc D(\R) \toC \Spc D'(\R)$, where $h^\vee$ is the time-reversed impulse response such that  $\langle h^\vee,\varphi \rangle=\langle h,\varphi^\vee \rangle$, where 
$\varphi^\vee(t)\eqdef\varphi(-t)$.

We now briefly show how we make use of these two mechanisms to specify the continuous extension $\Op T_h: L_\infty(\R) \to L_\infty(\R)$ with $h \in \Spc M(\R)$ (or, $h \in L_1(\R)$) that is implicitly referred to in Theorems \ref{Theo:BIBO2} and \ref{Theo:BIBO3}.
The enabling ingredient there is the continuity bound $\|h^\vee \ast \varphi\|_{L_1} \le \|h^\vee\|_{\Spc M} \|\varphi\|_{L_1}$ (see proof of Theorem \ref{Theo:Convolution}, Item 4), which also yields $h^\vee \ast \varphi \in L_1(\R)$ for all $\varphi \in \Spc D(\R)$. We then apply Definition \ref{Def:Convolution}
to specify the unique extension $\Op T_{h^\vee}: L_1(\R) \toC L_1(\R)$. An important point for our argumentation is that this (pre-adjoint) convolution operator also has a concrete implementation as
\begin{align}
\label{Eq:ConvMeasureaAdjoint}
t \mapsto (h^\vee \ast \varphi) (t)= \langle h^\vee, \varphi(t-\cdot)\rangle=\int_\R \varphi(\tau+t) \dint \mu_h(\tau),
\end{align}
which is supported by the same continuity bound with $\varphi$ now ranging over $L_1(\R)$
instead of the smaller  space $\Spc D(\R)$.
The existence and uniqueness of $\Op T_{h^\vee}: L_1(\R) \toC L_1(\R)$ then guarantees the existence and unicity of the adjoint $\Op T^\ast_{h^\vee}: L_\infty(\R) \toC L_\infty(\R)$.
To show that $\Op T^\ast_{h^\vee}=\Op T_{h}$, we use the explicit representation of $\Op T_{h^\vee}$ given by \eqref{Eq:ConvMeasureaAdjoint} with $h^\vee \in \Spc M(\R)$ and invoke Fubini's theorem to justify the interchange of integrals in
\begin{align*}
\langle\Op T_{h^\vee}\{f\}, g \rangle&=\int_\R \left(\int_\R  f(\tau+t) \dint \mu_h(\tau)\right) g(t) \dint t\\
& 
=\int_\R \int_\R  f(x)  g(x-\tau) \dint \mu_h(\tau) \dint x\\
& =\int_\R f(x) \left( \int_\R  g(x-\tau) \dint \mu_h(\tau)\right) \dint x\\
&=\langle f, \Op T_h\{g\} \rangle
\end{align*}
for any $f \in L_1(\R)$ and $g \in L_\infty(\R)$.
This proves that the original convolution operator defined by
\eqref{Eq:ConvMeasure} coincides with the adjoint of
$\Op T_{h^\vee}: L_1(\R) \toC L_1(\R)$,
which is also consistent with the property $h=(h^\vee)^\vee $.
Since $\Spc D(\R) \subset L_\infty(\R)$, we can therefore uniquely identify $\Op T_h: L_\infty(\R) \toC L_\infty(\R)$ as the extension
of $\Op T_h: \Spc D(\R) \to \Spc D'(\R)$ that preserves the adjoint relation $\Op T^\ast_{h^\vee}=\Op T_{h}$. 

\subsection{Proof of Banach Variants of BIBO Stability}
The Banach spaces of interest for us are
$\Spc X=C_0(\R),L_p(\R)$ and $\Spc Y=C_{0}(\R), C_{\rm b}(\R), L_p(\R)$ with $p\ge1$.

\begin{theorem} 
\label{Theo:Convolution}
Depending on the functional properties of its impulse response $h\in \Spc D'(\R)$, the convolution operator $\Op T_h: \Spc D(\R) \toC \Spc D'(\R)$ defined by  \eqref{Eq:ConvD} admits the following (unique) 
continuous extensions\footnote{See Definition \ref{Def:Convolution} and accompanying explanations. The bottom line is that the definition of these operators is compatible with the convolution integral \eqref{Eq:Conv} or \eqref{Eq:ConvMeasure} depending on whether $h$ is a function or a Radon measure. }
\begin{enumerate}  \setlength{\itemsep}{4pt}%
\item Let $p,q\in(1,\infty)$ be conjugate exponents with $\frac{1}{p}+\frac{1}{q}=1$. Then,
$h \in L_q(\R) \  \Rightarrow \ \Op T_h: L_p(\R) \toC C_{0}(\R)$ with
$\|\Op T_h\|_{L_p \to C_{0}}\le \|h\|_{L_q}$. 

\item $h \in L_1(\R) \quad \Rightarrow  \quad \Op T_h: L_\infty(\R) \toC C_{\rm b}(\R)$
with $\|\Op T_h\|_{L_\infty \to C_{\rm b}}=\|h\|_{L_1}$.

\item $h \in \Spc M(\R) \quad \Leftrightarrow\quad \Op T_h: C_0(\R) \toC C_{\rm b}(\R)$.
\item $h \in \Spc M(\R) \quad \Leftrightarrow \quad \Op T_h: L_{1}(\R) \toC L_{1}(\R)$. 
\item $h \in \Spc M(\R) \quad \Leftrightarrow \quad \Op T_h: L_\infty(\R) \toC L_\infty(\R)$. 
\end{enumerate}
Moreover, the operator norms for Items 3-5, characterized by an equivalence relation, are 
$\|\Op T_h\|_{C_0\to C_{\rm b}}=\|\Op T_h\|_{L_1\to L_1}=\|\Op T_h\|_{L_\infty\to L_\infty}=\|h\|_{\Spc M}$.
Finally, under the hypothesis of local integrability
$h\in L_{1,\rm loc}(\R)$, the continuity of $\Op T_h: L_\infty(\R) \toC C_{\rm b}(\R)$ implies that $h \in L_1(\R)$, which is the converse part of Item 2.

\end{theorem}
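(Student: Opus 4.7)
The plan is to reduce the claim to a bound on $\int_{-T}^{T}|h(\tau)|\,\dint\tau$ that is uniform in $T$, obtained by testing the operator on suitable mollified sign functions. Since by hypothesis $\Op T_h:L_\infty(\R)\toC C_{\rm b}(\R)$ is continuous with some finite operator norm $C$, restricting it to the subspace $\Spc D(\R)\subset L_\infty(\R)$---on which $\Op T_h\{\varphi\}=h\ast\varphi$ coincides with the classical convolution integral, thanks to $h\in L_{1,\rm loc}(\R)$---gives $\|h\ast\varphi\|_{\sup}\le C\|\varphi\|_{L_\infty}$ for every $\varphi\in\Spc D(\R)$, and this inequality is all the argument will need.

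The key construction is a smoothed truncated worst-case signal. Fix $T>0$ and set $s_T(t)=\mathrm{sign}\bigl(h(-t)\bigr)\,\Indic_{[-T,T]}(t)$, which is bounded by one and compactly supported. Convolving with a standard non-negative mollifier $\eta_\delta\in\Spc D(\R)$ of unit mass yields $\varphi_{T,\delta}=s_T\ast\eta_\delta\in\Spc D(\R)$ with $\|\varphi_{T,\delta}\|_{L_\infty}\le 1$, supported in $[-T-\delta,T+\delta]$, and with $\varphi_{T,\delta}\to s_T$ pointwise a.e.\ as $\delta\to 0$ by the Lebesgue differentiation theorem.

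Next I would evaluate the classical convolution at the origin, $(h\ast\varphi_{T,\delta})(0)=\int_\R h(\tau)\,\varphi_{T,\delta}(-\tau)\,\dint\tau$; for $\delta\le 1$ the integrand is dominated by $|h(\tau)|\,\Indic_{[-T-1,T+1]}(\tau)\in L_1(\R)$, so dominated convergence gives $(h\ast\varphi_{T,\delta})(0)\to\int_{-T}^{T}|h(\tau)|\,\dint\tau$. Since $h\ast\varphi_{T,\delta}\in C(\R)$ by the kernel theorem (Theorem \ref{Theo:SchwartzLSI}), we also have $(h\ast\varphi_{T,\delta})(0)\le\|h\ast\varphi_{T,\delta}\|_{\sup}\le C$ for every $\delta>0$. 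Letting $\delta\to 0$ and then $T\to\infty$ produces $\|h\|_{L_1}\le C<\infty$, which is the desired conclusion.

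The main obstacle, I expect, is precisely the fact that $\Spc D(\R)$ is not dense in $L_\infty(\R)$, which prevents one from feeding the worst-case signal $s_T$ itself into the extended operator through the completion-based framework of Definition~\ref{Def:Convolution} and directly invoking the bound. Mollification sidesteps this by working entirely inside $\Spc D(\R)$, while the local integrability of $h$ supplies the $L_1$-dominant that legitimizes the dominated-convergence step; both hypotheses---the continuity of the extension and the local integrability of $h$---are essential to the argument.
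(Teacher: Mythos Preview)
Your argument is correct and addresses the final assertion of the theorem (the converse of Item~2). The paper takes a different, more layered route for this part: from $\Op T_h:L_\infty(\R)\toC C_{\rm b}(\R)$ it passes via the isometric embedding $C_{\rm b}(\R)\embedIso L_\infty(\R)$ to the converse of Item~5, which is itself inherited from Item~3 by evaluating $(h\ast\varphi)(0)=\langle h,\varphi^\vee\rangle$ over $\varphi\in\Spc D(\R)$ and recognizing the dual norm $\|h\|_{\Spc M}$; it then invokes Proposition~\ref{Theo:BIBOcriterion}, which uses the Jordan decomposition and a separate truncation argument to convert $h\in\Spc M(\R)\cap L_{1,\rm loc}(\R)$ into $h\in L_1(\R)$. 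Your proof collapses these stages into a single estimate: by mollifying the truncated sign function you stay entirely inside $\Spc D(\R)$, and the local integrability of $h$ serves directly as the dominated-convergence majorant. This is more elementary and self-contained, and it delivers the sharp inequality $\|h\|_{L_1}\le\|\Op T_h\|_{L_\infty\to C_{\rm b}}$ in one stroke. The paper's detour, by contrast, isolates the intermediate conclusion $h\in\Spc M(\R)$, which it needs independently for Items~3--5; your shortcut trades that modularity for directness.
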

\begin{IEEEproof}

{\em Item 1}.
Under the assumption that $h \in L_q(\R)$ with $q\ge 1$,  we invoke H\"older's inequality
\begin{align*}
\left|(h \ast \varphi)(t)\right|\le\int_\R |h(\tau)|\cdot |\varphi(t-\tau)| \dint \tau\le \|h\|_{L_q}  \|\varphi(\cdot-t)\|_{L_p} 
\end{align*}
for any $\varphi \in \Spc D(\R)$, which yields the required upper bound
 ${\|h \ast \varphi\|_{L_\infty}} \le \|h\|_{L_q}  \|\varphi\|_{L_p}$. Likewise, by linearity, we get that
\begin{align*}
\left|(h \ast \varphi)(t)-(h \ast \varphi)(t-\Delta t)\right|&=
\left|h \ast \big(\varphi(t)-\varphi(t-\Delta t)\big)\right| \\
& \le \|h\|_{L_q} \cdot \|\varphi-\varphi(\cdot-\Delta t)\|_{L_p}.
\end{align*}
Due to the constraining topology of $\Spc D(\R^d)$, 
$\lim_{\Delta t\to 0}\|\varphi-\varphi(\cdot-\Delta t)\|_{L_p}=0$ for any $p\ge1$,
which 
proves the continuity of the 
function $t \mapsto {(h \ast \varphi)(t)}$. This leads to the intermediate outcome $h \ast \varphi \in C_{\rm b}(\R)$ for all $\varphi \in \Spc D(\R)$. 

If we now replace $h$ by $\phi \in \Spc D(\R)$, we readily deduce that
$\Op T_\phi\{\varphi\}=\phi \ast \varphi$ is compactly supported; hence, $\Op T_\phi\{ \varphi \}\in C_0(\R)$ for all $\varphi \in \Spc D(\R)$ with ${\|\phi \ast \varphi\|_{L_\infty}}\le \|\phi\|_{L_q}\|\varphi\|_{L_p}$. We then invoke
Theorem \ref{Theo:BLTExtension} with $\Spc X=\overline{(\Spc D(\R),\|\cdot\|_{p})}$
to deduce that
$\Op T_\phi: L_p(\R) \toC C_0(\R)$ for $p\in [1,\infty)$ and $\Op T_\phi: C_0(\R) \toC C_0(\R)$ for any $\phi \in \Spc D(\R)$.
Since the convolution is commutative, this implies that $\phi \ast h=h \ast \phi \in C_0(\R)$ for any $h\in L_q(\R)$ with $q\in(1,\infty)$
\big(resp., $h \in C_0(\R)$\big)
and $\phi \in \Spc D(\R)\subset L_p(\R)$
which, by completion with respect to the $\|\cdot\|_{L_p}$ norm with $p\in(1,\infty)$, gives $\Op T_h: L_p(\R) \toC C_0(\R)$
with $\|\Op T_h\|_{L_p \to C_0}\le \|h\|_{L_q}$
(resp., $\Op T_h: C_0(\R) \toC C_0(\R)$ with $\|\Op T_h\|_{C_0 \to C_0}= \|h\|_{L_1}$).


{\em Item 3}. Since $C_{\rm b}(\R) \embedIso L_\infty(\R)$, the relevant duality bound there is \eqref{Eq:Minequal}, which yields $\|h \ast \varphi\|_{L_\infty}\le \|h\|_{\Spc M}\; \|\varphi\|_{L_\infty}$. This allows us to use the same argument as in Item 1 to show that
$\Op T_h\{\varphi\} \in C_{\rm b}(\R)$ for all $\varphi \in \Spc D(\R)$.
 Since $C_0(\R)=\overline{(\Spc D(\R^d),\|\cdot\|_{L_\infty})}$, we then apply the proven completion technique to specify the unique operator
$\Op T_{h}: C_0(\R) \to C_{\rm b}(\R)$ with
  $\|\Op T_h\|_{C_0 \to C_{\rm b}}\le \|h\|_{\Spc M}$. 
Conversely, let $\Op T_h: C_0(\R) \toC C_{\rm b}(\R)$ with operator norm $\|T_h\|_{C_0 \to C_{\rm b}}<\revise{\infty}$.
Then, for any $\varphi \in C_0(\R)$, $$(h \ast \varphi)(0)=\langle h, \varphi^\vee\rangle \le \|T_h\|_{C_0 \to C_{\rm b}}\; \|\varphi\|_{L_\infty}$$ with $\varphi^\vee \in C_0(\R)$ 
and $\|\varphi^\vee\|_{L_\infty}=\|\varphi\|_{L_\infty}$. By substituting $\varphi$ for $\varphi^\vee$ and by recalling that $\Spc D(\R)$ is dense in $C_0(\R)$, we get that
\begin{align}
\sup_{\varphi \in C_0(\R)\backslash\{0\}} 
\frac{\langle h, \varphi\rangle}{\|\varphi\|_{L_\infty}}&=\sup_{\varphi \in \Spc D(\R)\backslash\{0\}} 
\frac{\langle h, \varphi\rangle}{\|\varphi\|_{L_\infty}} \nonumber \\
&=\|h\|_{\Spc M} \le \|\Op T_h\|_{C_0 \to C_{\rm b}},
\end{align}
which then also proves that the bound is sharp.

{\em Item 4}. The key here is the estimate 
\begin{align*}
\int_\R \big|(h \ast f) (t)\big|\, \dint t &\le \int_\R \int_\R |f(t-\tau)| \, \dint|\mu_h|( \tau)\,\dint t  \\
&=\int_\R \left(\int_\R |f(x)|\dint x \right)  \dint|\mu_h|( \tau) \tag{by Fubini}
\\&= \left(\int_\R |f(x)| \dint x \right) \left(\int_\R \dint |\mu_h|\right)
\\&= \|f\|_{L_1}\, \|h\|_{\Spc M},
\end{align*}
from which we deduce
the boundedness of $\Op T_h: L_1(\R) \to L_1(\R)$ with
$\|\Op T_h\|_{L_1 \to L_1}\le \|h\|_{\Spc M}$. 
(The extension technique is essentially the same as in Item 1 with $p=1$ and $L_1(\R)=\overline{(\Spc D(\R),\|\cdot\|_{L_1})}$.) The converse
implication and the sharpness of the bound will be deduced from Item 5 by duality.

{\em Item 5}. Since $L_\infty(\R)=\big(L_1(\R)\big)'$, the adjoint of $\Op T_h: L_1(\R) \toC L_1(\R)$ is $\Op T_h^\ast=\Op T_{h^\vee}: L_\infty(\R) \toC L_\infty(\R)$.
The equivalence $h\in \Spc M(\R) \Leftrightarrow h^\vee \in \Spc M(\R)$ implies
the continuity of $\Op T_{h}: L_\infty(\R) \toC L_\infty(\R)$ with
$\|\Op T_h\|_{L_\infty \to L_\infty}\le \|h\|_\Spc M=\|h^\vee\|_{\Spc M}$. 
As for the converse implication, we take advantage of the embedding
$C_0(\R)  \embedIso L_\infty(\R)$, which allows us to reuse the argument
of Item 3.


{\em Item 2 and Its Converse}. The first part follows from the beginning of the proof of Item 1, the application of the extension principle for $p=1$, and the commutativity of the convolution integral, which yields $f \ast h= h \ast f \in C_{\rm b}(\R)$ with $\|h \ast f\|_{L_{\infty}}\le \|h\|_{L_1} \|f\|_{L_\infty}$ for any $f \in L_\infty(\R)$. We show that the bound is sharp by applying the convolution operator to the ``worst-case'' signal $f_0$ identified in \eqref{Eq:f0}.
Conversely, let $\Op T_h: L_\infty(\R) \toC C_{\rm b}(\R)$ with $\|T_h\|_{L_\infty \to C_{\rm b}}<\revise{\infty}$. Taking advantage of the isometric embedding $
C_{\rm b}(\R)  \embedIso L_\infty(\R)$, we then invoke the equivalence in Item 5 to deduce that
$\|h\|_{\Spc M}<\revise{\infty}$, which implies that $h\in \Spc M(\R)$. The announced equivalence then follows from Proposition \ref{Theo:BIBOcriterion} in Section \ref{Sec:Radon}.
%
\end{IEEEproof}

The result in Item 1 is discussed in most advanced treatises on the Fourier transform (e.g., \cite[Proposition 8.8, p 241]{Folland2013}).
We are including it here in a self-contained form---at the expense of a few more lines in the proof of Item 2---because it nicely characterizes the regularization effect of convolution.
The equivalences stated in Item 4 and Item 5 are known in the context of the theory of 
$L_p$ Fourier multipliers \cite[Section 2.5]{Grafakos2004}, even though the latter does not seem to have permeated to the engineering literature. 
\revise{The equivalence in Item 4 may also be identified as a special instance of Wendel's theorem in the abstract theory of multipliers on locally compact Abelian groups \cite[Theorem 0.1.1, p. 2]{Larsen1970}.}
Interestingly, the condition $h\in \Spc M(\R)$ is also
sufficient for the continuity of $\Op T_h: L_p(\R) \toC L_p(\R)$, a claim that is supported
by the Young-type norm inequality
\begin{align}
\label{Eq:Youngp}
\|h \ast f\|_{L_p} \le \|h\|_{\Spc M} \, \|f\|_{L_p},
\end{align}
which holds for any $f \in L_p(\R)$ with $p\ge1$. However, \eqref{Eq:Youngp} is only sharp at the two end points $p=1,+\infty$, in conformity with the statements in Items 4 and 5.
In fact, the only other case where the complete class of convolution operators $\Op T_h: L_p(\R) \toC L_p(\R)$ has been characterized is for $p=2$, with the necessary and sufficient condition being $\widehat h \in L_\infty(\R)$ (bounded frequency response) \cite[Theorem 3.18, p. 28]{Stein1971}, which is slightly more permissive than the BIBO requirement.
Indeed, $h \in \Spc M(\R)\Rightarrow \widehat h \in L_\infty(\R)$, whereas the reverse implication does not hold.

We like to single out Item 3 in Theorem \ref{Theo:Convolution}
as the pivot point that facilitates the derivation of the (nontrivial) reverse 
implications---namely, the necessity of the condition $h\in \Spc M(\R)$.
While the listed property is sufficient for our purpose, we can refer to a recent characterization by Feichtinger \cite[Theorem 2, p. 499]{Feichtinger2017b} which, in the present context, translates into the refined statement  ``$h \in \Spc M(\R) \Leftrightarrow \Op T_h: C_0(\R) \toC C_0(\R)$.'' 
The additional element there is the vanishing of
$(h \ast f)(t)$ at infinity, which calls for a more involved proof.

%
%
%

While the statement in Item 2 is a special case of Item 5, as made explicit in Section \ref{Sec:Radon}, the interesting part of the story is that
this restriction induces a smoothing effect on the output, ensuring that the function 
$t \mapsto (h \ast f)(t)$ is continuous. There is obviously no such effect for the
case $h=\delta \in \Spc M(\R)$ (identity) or, by extension, $h_{\rm d}=\sum_{n\inZ} a[n]\delta(\cdot-n)\in \Spc M(\R)$
with $\|h_d\|_{\Spc M}=\|a\|_{\ell_1}$, which corresponds to the continuous-time transposition of a digital filter.
\subsection{Explicit Criterion for BIBO Stability}
\label{Sec:Radon}
We now show how to determine $\|h\|_{\Spc M}$ (our extended criterion for BIBO stability) under the assumption that 
$h \in L_{1, {\rm loc}}(\R)$.
Any such impulse response  can be identified with a distribution by considering the linear form
\begin{align}
\label{Eq:DistributionD}
h: \varphi \mapsto \langle h, \varphi  \rangle = \int_\R h(t) \varphi(t)\dint t,
\end{align}
which continuously maps $\Spc D(\R)\to \R$. 
\Revise{It turns out that the latter is a special instance of a real-valued Radon measure, which is an extended type of measure whose $\|\cdot\|_{\Spc M}$-norm is not necessarily finite.}
\begin{definition}[{\cite{Schwartz:1966}}]
\label{Def:Radon}
\Revise{
A distribution
$f \in \Spc D'(\R)$ 
is called a real-valued Radon measure if, for any compact subset $\mathbb{K} \subset \R$, there exists a constant
$C_\mathbb{K}>0$ such that
\begin{align}
\label{Eq:RadonBound}
\langle f, \varphi \rangle\le  C_\mathbb{K} \sup_{t \in \mathbb{K}} | \varphi(t)|
\end{align}
for all $\varphi\in \Spc D(\mathbb{K})=\big\{\varphi \in \Spc D(\R): \varphi(t)=0, \forall t\notin \mathbb{K}\big\}$.}

\Revise{
A distribution 
$f^+ \in \Spc D'(\R)$ 
is said to be positive if
$\langle f^+,\varphi \rangle\ge 0$ for all 
$\varphi \in \Spc D^+(\R)=\big\{\varphi \in \Spc D(\R): \varphi(t)\ge 0, t \inR\big\}$. } 
\end{definition}
\Revise{The connection between the two kinds of distributions 
in Definition \ref{Def:Radon} is that a positive distribution is a special instance of a Radon measure,
while any real-valued Radon measure $f$ admits a unique decomposition as $f=(f^+-f^-)$, where both $f^+, f^-\ge 0$ are positive distributions  \cite[Theorem 21.2, p. 218]{Treves2006}.
One then also defines the corresponding ``total-variation measure''
$|f|=f^+ +f^-$, which is positive by construction.}

\Revise{It turns out that the Dirac impulse $\delta$ is a positive Radon measure with a universal bounding constant $C_\mathbb{K}=1$. Likewise, the minimal constant in \eqref{Eq:RadonBound} for $f\in L_{1,{\rm loc}}(\R)$ is
$\C_\mathbb{K}=\int_{\mathbb{K}}|f(t)|\dint t$, which is essentially what is expressed in Proposition \ref{Theo:BIBOcriterion}.}

\begin{proposition}[Total-variation norm for measurable functions]
\label{Theo:BIBOcriterion}
Let $h \in L_{1, {\rm loc}}(\R)$. Then, $\|h\|_{\Spc M}=\|h\|_{L_1}=\int_{-\infty}^{+\infty}|h(t)|\dint t$. Consequently,
$h \in \Spc M(\R)$ if and only if 
 $\int_{-\infty}^{+\infty}|h(t)|\dint t<\infty$.
\end{proposition}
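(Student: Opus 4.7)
The plan is to prove the identity $\|h\|_{\Spc M}=\|h\|_{L_1}$ as a two-sided bound, allowing both sides to be $+\infty$ simultaneously; the ``if and only if'' conclusion then follows directly. Throughout, I exploit the fact that for $h\in L_{1,{\rm loc}}(\R)$ the pairing $\langle h,\varphi\rangle$ is given by the concrete Lebesgue integral \eqref{Eq:DistributionD}.

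The upper bound is immediate: for any $\varphi\in\Spc D(\R)$ with $\|\varphi\|_{\sup}\le 1$,
\[
\langle h,\varphi\rangle=\int_\R h(t)\varphi(t)\,\dint t\le \|\varphi\|_{\sup}\int_\R |h(t)|\,\dint t\le \|h\|_{L_1},
\]
so taking the supremum yields $\|h\|_{\Spc M}\le \|h\|_{L_1}$ (trivially true if $\|h\|_{L_1}=\infty$).

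The content of the proposition lies in the matching lower bound. Fix $n\in\N$ and set $s_n(t)={\rm sign}\bigl(h(t)\bigr)\,\Indic_{[-n,n]}(t)$, so that $\|s_n\|_{\sup}\le 1$, $s_n$ is compactly supported, and $\int_\R h\,s_n\,\dint t=\int_{-n}^{n}|h(t)|\,\dint t$. Since $s_n$ is not smooth, I would regularize it by a non-negative mollifier $\rho_\epsilon\in\Spc D(\R)$ with $\int\rho_\epsilon=1$ and ${\rm supp}(\rho_\epsilon)\subset[-\epsilon,\epsilon]$, defining $\varphi_{n,\epsilon}\eqdef\rho_\epsilon\ast s_n\in\Spc D(\R)$. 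The non-negativity of $\rho_\epsilon$ and the normalization $\|\rho_\epsilon\|_{L_1}=1$ ensure $\|\varphi_{n,\epsilon}\|_{\sup}\le 1$, so $\varphi_{n,\epsilon}$ is an admissible test function in \eqref{Eq:Mdual}.

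The key step, and the one I expect to demand the most care, is to show that $\int_\R h\,\varphi_{n,\epsilon}\,\dint t\to\int_{-n}^{n}|h(t)|\,\dint t$ as $\epsilon\to 0$. A naive uniform-convergence argument fails, because $s_n$ has jumps and $\varphi_{n,\epsilon}$ does not converge uniformly to $s_n$. Instead, I would fix a compact interval $\mathbb{K}\supset[-n-1,n+1]$ containing the supports of $s_n$ and of every $\varphi_{n,\epsilon}$ with $\epsilon\le 1$, note that $h\Indic_\mathbb{K}\in L_1(\R)$ by local integrability, and approximate it by $g\in\Spc C_{\rm c}(\R)$ with $\|h\Indic_\mathbb{K}-g\|_{L_1}<\delta$ (density of $\Spc C_{\rm c}(\R)$ in $L_1(\R)$). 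Splitting
\[
\int_\R h(\varphi_{n,\epsilon}-s_n)\,\dint t=\int_\R (h\Indic_\mathbb{K}-g)(\varphi_{n,\epsilon}-s_n)\,\dint t+\int_\R g(\varphi_{n,\epsilon}-s_n)\,\dint t,
\]
the first piece is bounded by $2\delta$ since $\|\varphi_{n,\epsilon}-s_n\|_{\sup}\le 2$. For the second, Fubini gives $\int g\,\varphi_{n,\epsilon}=\int (g\ast\rho_\epsilon^\vee)\,s_n$, and $g\ast\rho_\epsilon^\vee\to g$ uniformly (because $g$ is uniformly continuous with compact support); combined with $s_n\in L_1$, this forces the second piece to vanish as $\epsilon\to 0$. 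Hence $\|h\|_{\Spc M}\ge\int_{-n}^{n}|h(t)|\,\dint t-2\delta$ for arbitrary $\delta>0$, and then $\|h\|_{\Spc M}\ge\int_{-n}^{n}|h(t)|\,\dint t$. Letting $n\to\infty$ and invoking monotone convergence yields $\|h\|_{\Spc M}\ge\|h\|_{L_1}$, which completes the proof in both the finite and infinite regimes.
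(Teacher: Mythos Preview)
Your proof is correct and proceeds by a genuinely different route from the paper's. The paper first invokes the Jordan decomposition $h=h^+-h^-$ to reduce the problem to the positive function $|h|=h^++h^-$, for which the supremum in \eqref{Eq:Mdual} is easy to saturate using smooth plateau functions $\varphi_T$ that equal $1$ on $[-T,T]$; it then handles the bounded and unbounded cases separately. You instead prove both inequalities directly and uniformly: the upper bound $\|h\|_{\Spc M}\le\|h\|_{L_1}$ is the obvious H\"older-type estimate, and for the lower bound you build near-optimal test functions by mollifying ${\rm sign}(h)\,\Indic_{[-n,n]}$ and pass to the limit by transferring the mollifier onto a continuous approximant of $h\Indic_{\mathbb K}$ via Fubini.

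Your approach is more self-contained---it avoids invoking the Jordan decomposition of measures and treats the finite and infinite cases in one stroke---at the cost of a slightly more delicate limiting argument (the $\epsilon\to0$ step). The paper's approach is shorter once the measure-theoretic machinery is granted, and it makes the connection to the total-variation measure $|h|$ explicit, which fits the surrounding narrative. Both are valid; yours is arguably the more elementary of the two.
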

\begin{IEEEproof}
\Revise{ 
In accordance with Definition \ref{Def:Radon}, we view 
$h \in L_{1, {\rm loc}}(\R)$ 
as a real-valued Radon measure
with $h^+(t)=\max\big(h(t),0\big)$ and $h^-(t)=\max\big(0,-h(t)\big)$,
while the corresponding total-variation measure is $|h|=h^++h^-\in L_{1, {\rm loc}}(\R)$ with $|h|:
t \mapsto |h(t)|$, which is consistent with the notation. We then distinguish between two cases.}

\Revise{ 
{\em (i) Bounded Scenario.} When
$\|h\|_{ \Spc M}<\infty$,  we can invoke the classical Jordan decomposition of a measure (see \cite{Folland2013}), 
\begin{align*}
\forall f\in \Spc M(\R):\ \|f\|_{\Spc M}=\|f^+\|_{\Spc M}+\|f^-\|_{\Spc M}=\||f|\|_{\Spc M}<\infty,
\end{align*}
which allows us to reduce the problem to the easier determination of $\||h|\|_\Spc M$.}
Accordingly, for any given $T>0$, we define  $h_T=|h| \cdot \Indic_{[-T,T]} \ge 0$ and observe that
\begin{align*}
\|h_T\|_{\Spc M}=\sup_{\varphi \in \Spc D(\R):\, \|\varphi\|_{L_\infty}\le 1} \langle h_T, \varphi \rangle&=\langle h_T, 1 \rangle\\
&=\|h_T\|_{L_1}< \infty,
\end{align*}
where the supremum is achieved by considering any test function $\varphi_T \in \Spc D(\R)$ such that $\varphi_T(t)=1$ for all $t\in [-T,T]$. \Revise{In the limit, we get that
$\lim_{T \to \infty} \|h_T\|_{L_1}=\lim_{T \to \infty} \|h_T\|_{\Spc M}=\| |h|\|_{\Spc M}<\infty$, from which we conclude that $\||h|\|_{\Spc M}=\|h\|_{\Spc M}=\|h\|_{L_1}$.}

\Revise{{\em (ii) Unbounded Scenario.}
The  condition $\|h\|_{\Spc M}=\infty$ can be formalized as: for any $n\in \N$, there exists  $\varphi_n\in \Spc D(\R)$
with $\|\varphi_n\|_{L_\infty}=1$ such that $\langle  h,\varphi_n\rangle> n$. However,
\begin{align*}
\langle  h, \varphi_n\rangle\le \big|\int_\R h(t) \varphi_n(t) \dint t\big|\le \int_\R |h(t)|\dint t \; \|\varphi_n\|_{L_\infty}=\|h\|_{L_1}.
\end{align*}
Therefore, $\|h\|_{L_1}>n$ for all $n \in \N$, leading to $\|h\|_{L_1}=\infty$.}


\end{IEEEproof}

Let us now conclude with a few more observations.

\revise{Since $L_{1,{\rm loc}}(\R)$ can be identified as the subspace of measures that are absolutely continuous (see \cite[p. 18]{Schwartz:1966}),
the result in Proposition \ref{Theo:BIBOcriterion} is consistent with the well-known property in probability theory that $L_1(\R)$ coincides with the
subset of bounded measures that are absolutely continuous.}


Under the minimalistic assumption that $h \in  L_{1, {\rm loc}}(\R)$, the convolution integral \eqref{Eq:Conv} is
well defined for any $t \inR$ provided that the input function $f: \R \to \R$ is {\em bounded} and {\em compactly supported}. 
Equation \eqref{Eq:Conv} then even yields an output function $t \mapsto (h \ast f)(t)$ that is continuous, as shown in Appendix D. However, the trouble comes from the fact that the output then inherits the potential lack of decay of $h$ when $h\notin L_1(\R)$.

One can also make a connection between the result in Proposition \ref{Theo:BIBOcriterion} and the standard argument that is presented to justify the necessity of $h \in L_1(\R)$.
When the latter condition is fulfilled, we have that
\begin{align}
\| h\|_{\Spc M}&=\sup_{\varphi \in \Spc D(\R):\, \|\varphi\|_{L_\infty}\le 1} \langle h, \varphi \rangle
\nonumber\\
&=\|h\|_{L_1}=\sup_{\phi \in L_\infty(\R):\, \|\phi\|_{L_\infty}\le 1} \langle h, \phi \rangle=\int_\R h(t)\phi_0(t)\dint t,
\end{align}
where $\phi_0(t)={\rm sign}\big(h(t)\big)$.
While the supremum is achieved exactly over $L_\infty(\R)$ by taking
$\phi=\phi_0$, it is a bit trickier over $\Spc D(\R)$ because of the additional smoothness requirement.
Yet, due to the definition of the supremum, for any $\epsilon>0$ there exists a function 
$\varphi_\epsilon \in \Spc D(\R)$ with $\|\varphi_\epsilon\|_{\infty}=1$
such that $\int h(t)\varphi_\epsilon (t)\dint t = (1-\epsilon) \|h\|_{\Spc M}\le \|h\|_{\Spc M}=\|h\|_{L_1}$. By taking $\epsilon$ arbitrarily small, we end up with 
$\varphi_\epsilon$ being a ``smoothed'' rendition of $\phi_0$, so that the spirit of the initial argument is retained.

\section*{Appendix}
\subsection*{A. Is the Dirac Distribution a Member of $L_1(\R)$?}
Let us start with the historical observation that the eponymous impulse $\delta$ is already present in the (early) works of both Fourier and Heaviside \cite{Komatsu2002}. The former, as one would expect, defined it via an ``improper'' integral (the inverse Fourier transform of ``$1$''), while the latter identified $\delta$ as the ``formal'' derivative of the unit step (a.k.a.\ the Heaviside function). However, the mathematics for giving a rigorous sense to these identifications were missing at the time; one had to wait for the development Schwartz' distribution theory in the 1950s \cite{Schwartz:1966}, which already shows that the mere process of obtaining a rigorous definition of $\delta$ was far from trivial.

From the pragmatic point of view of an engineer, the title question is at the heart of the matter to understand the scope of Proposition \ref{Theo:BIBO}, and the source of some confusion, too. 
Let us start by listing the elements that could suggest that the answer to the question is positive.
\begin{enumerate}
\item It is common practice to make liberal use of what mathematicians consider abusive notations; in particular, equations such as
$f(t)=\int_\R \delta(\tau)f(t-\tau)\dint \tau$, which could suggest that
$\delta(\tau)$ can be manipulated as if it were a classical function of $\tau$.
\item Dirac's $\delta$ has the unit ``integral'' $\langle \delta,1\rangle=1$,
which is indicated formally as $\int_\R \delta(\tau)\dint \tau=1$. Moreover, $\delta\ge 0$ in the sense that it is a positive distribution (see Definition \ref{Def:Radon}).
\item The Dirac impulse is often described as the limit of $\varphi_n(t)=\frac{n}{\sqrt{2 \pi}} \ee^{-(t n)^2/2}$ as $n\to \infty$, with $\varphi_n\in \Spc S(\R)$. Since $\|\varphi_n\|_{L_1}=1$ for any $n>0$, this could suggest that $\lim_{n\to \infty}\|\varphi_n\|_{L_1}=1$ as well.
\end{enumerate}
In order to convince the reader that the answer to the title question is actually negative, we now refute these intuitive arguments one by one.
\begin{enumerate}
\item The explicit description of the Dirac impulse as a centered Gaussian distribution whose standard deviation $\sigma_n=1/n$ tends to zero suggests that
$\delta=\lim_{n\to \infty} \varphi_n$ must be entirely localized at $t=0$.
%
%
%
The best attempt at describing this limit in Lebesgue's world of measurable functions would be
$$
p_0(t)=\left\{\begin{array} {ll}
+\infty,&  t=0\\ 0,& \mbox{otherwise},
\end{array}\right.
$$
which is equal to zero almost everywhere.
However, since the width of the impulse is zero, we get that $\int_\R p_0(t)\dint t=0$, which is incompatible with the property that $\int_\R \delta(t)\dint t=1$. This points to the impossibility of representing $\delta$ by a function in $L_1(\R)$ or even in $L_{1,{\rm loc}}(\R)$.
Strictly speaking, $\delta$ is defined as a continuous linear functional on $\Spc D(\R)$---or, by extension, $C_0(\R)$---which precludes the application of any nonlinear operation (such as  $|\cdot|^p$) to it.
\item The generalized Fourier transform of $\delta$ is $\Fourier\{\delta\}=1$, which is  bounded, but not decreasing at infinity.
If $\delta$ was included in $L_1(\R)$, this would contradict the Riemann-Lebesgue Lemma, which is equivalent to $\Fourier: L_1(\R) \toC C_0(\R)$ with $\|\Fourier\|_{L_1 \to C_0}=1$.
By contrast, the inclusion $\delta\in \Spc M(\R)$ is compatible with the (dual) continuity property
of the Fourier transform $
\Fourier^\ast, \Fourier: \Spc M (\R) \toC L_\infty(\R)
$ with $\|\Fourier^\ast\|_{\Spc M \to L_0}=1$.
\item While the sequence of rescaled Gaussians $(\varphi_n)$ converges to $\delta\in \Spc S'(\R) \embedC \Spc D'(\R)$ in the (weak) topology of $\Spc S'(\R)$ (Schwartz' space of tempered distributions), the problem is that it fails to be a Cauchy sequence in the (strong) norm topology of $L_1(\R)$. Hence, there is no guarantee that $\delta=\lim_{n\to \infty}\varphi_n$ stays in $L_1(\R)$.
\end{enumerate}

\subsection*{B. Examples of Inaccurate Statements on BIBO Stability}
This list is far from exhaustive and not intended to downplay the important contributions of the listed people who are internationally recognized leaders in the field. Its sole purpose is to illustrate the omnipresence of the misconception in the engineering literature, including in some of the most popular and authoritative textbooks in the theory of linear systems and signal processing.

As a start, one can read in the English version of Wikipedia that a necessary and sufficient condition for the BIBO stability of a convolution operator is that its impulse response be absolutely integrable, formulated as
$\int_\R |h(\tau)| \dint \tau =\|h\|_{L_1}< \infty$. In view of the discussion around Proposition \ref{Theo:BIBO}, this is only correct if one restricts the scope of the statement to those impulse responses that are Lebesgue-measurable and locally integrable. 

Kailath mentions in \cite[p. 175]{Kailath1980} that the equivalence between BIBO stability and $h\in L_1(\R)$ is well known, and attributes the result to James, Nichols, and Phillips \cite{James1947}.  It turns out that the pioneers of the theory on control and linear systems were focusing their attention on analog systems ruled by ordinary differentiable equations whose impulse responses are sums of causal exponentials and, therefore, Lebesgue-measurable. 
Kailath then presents a proof on p.\ 176 that is essentially the one we used for
Proposition \ref{Theo:BIBO}, except that he neither considers a limit process nor explicitly says that $h$ must be (locally) integrable.

Oppenheim and Willsky discuss the property in \cite[p. 113-114]{Oppenheim:1996}. To justify the BIBO stability of the pure time-shift operator (including the identity), they then present an argument in support of the inclusion of $\delta(\cdot-t_0)$ in $L_1(\R)$ (Example 2.13) which, in view of the discussion in Appendix A, is flawed.

Vetterli {\em et al.} claim in \cite[Theorem 4.8, p. 357]{Vetterli2014} that the operator $\Op T_h$ is BIBO-stable from $L_\infty(\R) \to L_\infty(\R)$ if and only if $h\in L_1(\R)$, a statement that is incompatible with Theorem \ref{Theo:BIBO3}. This can be corrected by limiting the scope of the equivalence as in the statement of Theorem \ref{Theo:BIBO2}.

\Revise{
\subsection{Convolution in the ``Unstable'' Scenario} 
\label{App:Unstable}
Here, we characterize the output of a potentially ``unstable'' filter when the input signal is compactly supported. The enabling hypothesis is the local integrability of the impulse response.
\begin{proposition}
\label{Prop:Loc}
Let $f\in L_\infty(\R)$ be compactly supported and $h \in L_{1,{\rm loc}}(\R)$. Then, the function $t \mapsto (h \ast f)(t)$ defined by 
\eqref{Eq:Conv} is bounded on any compact set $\mathbb{K} \subset \R$ and continuous; that is,  $h \ast f \in C(\R)$. 
\end{proposition}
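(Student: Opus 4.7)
The plan is to exploit the compact support of $f$ to reduce everything to the standard continuity of translation in $L_1(\R)$. Let $\operatorname{supp}(f)\subset[-R,R]$. Because $f$ vanishes outside this interval, the integral in \eqref{Eq:Conv} collapses to
\[
(h\ast f)(t)=\int_{t-R}^{t+R}h(\tau)f(t-\tau)\,\dd\tau,
\]
which is a well-defined Lebesgue integral for every $t\inR$ since $h\in L_{1,{\rm loc}}(\R)$ and $f\in L_\infty(\R)$. For any compact $\mathbb{K}\subset\R$, the integration interval for $t\in\mathbb{K}$ stays inside the fixed compact set $\mathbb{K}'=\mathbb{K}+[-R,R]$, so
\[
\sup_{t\in\mathbb{K}}|(h\ast f)(t)|\le\|f\|_{L_\infty}\int_{\mathbb{K}'}|h(\tau)|\,\dd\tau<\infty,
\]
which settles the boundedness claim.

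For continuity at a fixed $t_0\in\R$, I would localize $h$ so as to promote it to a globally $L_1$ object. Set $K=[t_0-R-1,\,t_0+R+1]$ and $\tilde h=h\cdot\Indic_K$, which lies in $L_1(\R)$ by local integrability of $h$. For $|t-t_0|\le 1$, the integration domain in \eqref{Eq:Conv} is contained in $K$, so $(h\ast f)(t)=(\tilde h\ast f)(t)$. It therefore suffices to verify that $\tilde h\ast f$ is continuous on $\R$, and continuity at $t_0$ of $h\ast f$ will follow.

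Writing $(\tilde h\ast f)(t)=\int_\R \tilde h(t-\sigma) f(\sigma)\,\dd\sigma$ and using $\|f\|_{L_\infty}$ as a pointwise majorant for $|f(\sigma)|$, I get
\[
|(\tilde h\ast f)(t)-(\tilde h\ast f)(s)|\le\|f\|_{L_\infty}\!\int_\R\!|\tilde h(t-\sigma)-\tilde h(s-\sigma)|\,\dd\sigma.
\]
The substitution $u=t-\sigma$ identifies the right-hand side with $\|f\|_{L_\infty}\,\|\tilde h-\tilde h(\cdot+(s-t))\|_{L_1}$, which tends to zero as $s\to t$ by the classical continuity of translation in $L_1(\R)$. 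The only slightly subtle point---and what I would flag as the conceptual obstacle---is that continuity cannot be extracted from $f$ itself, which is merely bounded and measurable; it has to come from the regularity of the (truncated) impulse response, whose status as a genuine $L_1$ function is precisely what the truncation step $\tilde h=h\cdot\Indic_K$ secures.
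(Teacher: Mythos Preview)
Your proof is correct and follows essentially the same route as the paper: truncate $h$ to a compact window determined by the support of $f$ and a neighborhood of the point of interest, thereby promoting it to an $L_1$ function, and then invoke the continuity of translation in $L_1(\R)$ to control $|(h\ast f)(t)-(h\ast f)(t_0)|$. The only cosmetic difference is that the paper fixes an arbitrary compact set $\mathbb{K}$ and uses a single truncation $h_{\mathbb{K}+\mathbb{M}}$ for all $t,t_0\in\mathbb{K}$, whereas you work pointwise with a unit neighborhood of $t_0$; both lead to the same estimate.
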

\begin{proof}
Because of the local integrability of $h$, the convolution integral \eqref{Eq:Conv} is well-defined for any $t \inR$ with
\begin{align*}
(h \ast f)(t)=\int_{\R} h(\tau)  f(t-\tau) \dint \tau=\int_{\mathbb{M}} f(x) h(t-x)  \dint x \tag{by change of variable}
\end{align*}
and
\begin{align*}
|(h \ast f)(t)|\le 
\|f\|_{L_\infty} \int_{\mathbb{M}} |h(t\pm\tau)|  \dint \tau<\infty,
\end{align*}
where $\mathbb{M}$ is the smallest symmetric interval such that $f(t)=f(-t)=0$ for all $t\notin \mathbb{M}$.
For any given open bounded set $\mathbb{K}\subset \R$, we then observe that
\begin{align*}
\forall t \in \mathbb{K}: \quad (h \ast f)(t)=(h_{\mathbb{K}+\mathbb{M}} \ast f)(t)
\end{align*}
where $h_{\mathbb{K}+\mathbb{M}}=h\cdot \One_{\mathbb{K}+\mathbb{M}}$ is the restriction of
the original impulse response to the set $\mathbb{K}+\mathbb{M}=\{t +\tau: t \in \mathbb{K}, \tau \in \mathbb{M}\}$.
Since $h_{\mathbb{K}+\mathbb{M}}\in L_1(\R)$, one has that
\begin{align}
\sup_{t \in \mathbb{K}} |h \ast f(t)|\le 
\|f\|_{L_\infty} \|h_{\mathbb{K}+\mathbb{M}}\|_{L_1}<\infty.
\end{align}
Likewise, for any $t, t_0 \in \mathbb{K}$, we have that 
\begin{align}
|h \ast f(t)&-h \ast f(t_0)| \nonumber\\
& \le \|f\|_{L_\infty}    \|h_{\mathbb{K}+\mathbb{M}}(t-\cdot)-h_{\mathbb{K}+\mathbb{M}}(t_0-\cdot)\|_{L_1}.
\label{Eq:contbound}
\end{align}
Next, we invoke Lebesgue's dominated-convergence theorem and the property that $C_0(\R)$ is dense in $L_1(\R)$ to show that 
$\|h_{\mathbb{K}+\mathbb{M}}(t-\cdot)-h_{\mathbb{K}+\mathbb{M}}(t_0-\cdot)\|_{L_1}\to 0$ as $t \to t_0$. 
This, together with \eqref{Eq:contbound},
implies that $\lim_{t\to t_0}|(h \ast f)(t)-(h \ast f)(t_0)|=0$, which expresses the continuity of $t \mapsto h \ast f(t)$ at $t=t_0$ for any $t_0\in \mathbb{K}$.  
\end{proof}
}
\section*{Acknowledgments}
The author is extremely thankful to Julien Fageot, Shayan Aziznejad and Hans Feichtinger
for having spotted inconsistencies in earlier versions of the manuscript and 
for their  helpful advice. \revise{He is also appreciative of 
Thomas Kailath and Vivek Goyal's feedback, as well as that of the three anonymous reviewers}.

\bibliographystyle{plain}
%
%
%
%
%
\bibliography{BIBO}

\end{document}